\newtheorem{thm}{Theorem}[section]
\newtheorem{cor}[thm]{Corollary}
\newtheorem{lem}[thm]{Lemma}
\newtheorem{prop}[thm]{Proposition}
\newtheorem{defn}[thm]{Definition}
\newtheorem{rem}[thm]{Remark}
\begin{document}

\begin{center}
{\Large \bf Maximal rigid subcategories in $2-$Calabi-Yau
triangulated categories \footnote{Supported by the NSF of China
(Grants 10771112) }

 \small{Dedicated to Claus M. Ringel on the occasion of his $65^{th}$ birthday }}
\bigskip

{\large Yu Zhou and
 Bin Zhu}
\bigskip

{\small
\begin{tabular}{cc}
Department of Mathematical Sciences & Department of Mathematical
Sciences
\\
Tsinghua University & Tsinghua University
\\
  100084 Beijing, P. R. China &   100084 Beijing, P. R. China
\\
{\footnotesize E-mail: yu-zhou06@mails.tsinghua.edu.cn} &
{\footnotesize E-mail: bzhu@math.tsinghua.edu.cn}
\end{tabular}
}
\bigskip


\end{center}

\def\s{\stackrel}
\def\Longrightarrow{{\longrightarrow}}
\def\A{\mathcal{A}}
\def\C{\mathcal{C}}
\def\D{\mathcal{D}}
\def\T{\mathcal{T}}
\def\R{\mathcal{R}}
\def\S{\mathcal{S}}
\def\H{\mathcal{H}}
\def\U{\mathscr{U}}
\def\V{\mathscr{V}}
\def\W{\mathscr{W}}
\def\X{\mathscr{X}}
\def\Y{\mathscr{Y}}
\def\Z{\mathcal {Z}}

\def\add{\mbox{add}}
\def\Aut{\mbox{Aut}}
\def\Hom{\mbox{Hom}}
\def\Ext{\mbox{Ext}}
\def\ind{\mbox{ind}}
\def\deg{\mbox{deg}}
\def \text{\mbox}

\def\add{\mbox{add}}
\def\Aut{\mbox{Aut}}
\def\End{\mbox{End}}
\def\Ext{\mbox{Ext}}
\def\deg{\mbox{deg}}
\def\Hom{\mbox{Hom}}
\def\ind{\mbox{ind}}
\def\dim{\mbox{dim}}
\def \text{\mbox}
\hyphenation{ap-pro-xi-ma-tion}

\begin{abstract}

We study the functorially finite maximal rigid subcategories in
$2-$CY triangulated categories and their endomorphism algebras.
Cluster tilting subcategories are obviously functorially finite and
maximal rigid; we prove that the converse is true if the $2-$CY
triangulated categories admit a cluster tilting subcategory.
 As a generalization of a result of [KR], we prove that any functorially finite maximal rigid subcategory is Gorenstein
 with Gorenstein dimension at most $1$. Similar as cluster tilting subcategory, one can mutate maximal rigid
 subcategories at any indecomposable object.
  If two maximal rigid objects are reachable via mutations, then their endomorphism algebras have the same representation
  type.
\end{abstract}

\textbf{Key words.} Maximal rigid subcategories; Cluster tilting
subcategories; $2-$CY triangulated categories; Gorenstein algebras;
Representation types.
\medskip

\textbf{Mathematics Subject Classification.} 16G20, 16G70.

\section{Introduction} In the categorification theory of cluster
algebras [FZ], cluster categories [BMRRT, K1, Am], (stable) module
categories over preprojective algebras [GLS1, GLS2, BIRS], and more
general $2-$Calabi-Yau triangulated categories with cluster tilting
objects [FuKe, Pa1] play a central role. We refer the reader to the
nice surveys [GLS, K2, BM, Rin] and the references there for the
recent developments.
\medskip

Cluster tilting objects (subcategories) in $2-$CY categories have
many nice properties. For examples, the endomorphism algebras are
Gorenstein algebras of dimension at most $1$ [KR]; cluster tilting
objects have the same number of non-isomorphic indecomposable direct
summands [DK, Pa2]. Importantly, in the categorification of cluster
algebras, cluster tilting objects categorify clusters of the
corresponding cluster algebras, and the combinatorics structure of
cluster tilting objects is the same as the combinatorics structure
of the corresponding cluster algebras [CC, CK].
\medskip

  Cluster tilting objects (subcategories) are maximal rigid objects (subcategories), the converse is not true in general.
  The first examples of $2-$Calabi-Yau
 categories in which maximal rigid objects are not cluster tilting
 were given in [BIKR] (see also the example in Section 5 of [KZ] for the example of triangulated category in which maximal objects are not cluster tilting).
  Cluster tubes introduced in [BKL] serves as another type of such examples. It was proved recently by Buan-Marsh-Vatne in [BMV] that cluster tubes contain maximal rigid objects,
 none of them are cluster tilting. Buan-Marsh-Vatne also proved that
   the set of maximal
 rigid objects in $2-$CY triangulated categories forms cluster structure satisfying the definition in [BIRS] by allowing
 loops, and the combinatorial structure of maximal rigid objects in a cluster
 tube models the combinatorics of a type $B$ cluster algebra. In
 [V, Y], the authors studied the endomorphism algebras of maximal rigid
 objects in cluster tubes, in particular, they proved that the
 endomorphism algebras are Gorenstein of Gorenstein dimension at
 most $1$.
\medskip

The aim of this paper is to give a systematic study of functorially
finite maximal rigid subcategories in $2-$CY triangulated categories
and endomorphism algebras of maximal rigid objects. For any
functorially finite maximal rigid subcategory $\R$ in a $2-$CY
triangulated category $\C$, one considers the extension subcategory
$\R *\R[1]$ (compare [Pla]). It is not equal to $\C$. Note that
under the condition that $\R$ is cluster tilting, we have $\C=\R*\R
[1]$ [KR].  We observe that any rigid object belongs to $\R*\R[1]$.
Using this fact, we prove that if a $2-$CY triangulated category
contains a cluster tilting subcategory, then any functorially finite
maximal rigid subcategory is cluster tilting. This generalizes
Theorem II.1.8 in [BIRS] from algebraic $2-$CY triangulated
categories to arbitrary $2-$CY triangulated categories. Then we
consider $2-$CY triangulated categories with maximal rigid
subcategories. It is proved that some results in [DK, Pa] also hold
in this setting. Namely, we prove that the representatives of
isomorphic classes of indecomposable objects of a functorially
finite maximal rigid subcategory form a basis in the split
Grothendieck group of another functorially finite maximal rigid
subcategory. In particular, maximal rigid objects have the same
number of non-isomorphism indecomposable direct summands. Using a
recent result of Nakaoka [Na], we prove that functorially finite
maximal rigid subcategories in $2-$CY triangulated categories are
Gorenstein of dimension at most $1$. This is a generalization of the
same results in cluster tubes[V, Y]. And it also generalizes
 the same results on cluster tilting subcategories of [KR] to functorially finite maximal rigid
 subcategories.
  Finally, we study the endomorphism algebras of maximal rigid objects
in $2-$CY triangulated categories. We call such algebras
 $2-$CY maximal tilted algebras. If two maximal rigid objects are reachable
 via simple mutations, then the corresponding $2-$CY maximal
 tilted algebras have the same representation type.
\medskip

The paper is organized as follows.
\medskip

 In Section 2, we prove that the
functorially finite maximal rigid subcategories are cluster tilting
in $2-$CY triangulated categories with a cluster tilting
subcategory. In Section 3, the notion of index of a rigid object
with respect to a cluster tilting subcategory is generalized by
replacing cluster tilting subcategory with functorially finite
maximal rigid subcategory
 with respect to maximal rigid subcategory (compare [Pla]). The representatives of
 isomorphism classes of indecomposable objects of
a functorially finite maximal rigid subcategory form a basis in the
split Grothendieck group of another functorially finite maximal
rigid subcategory. As a direct consequence, the numbers of
indecomposable direct summands of functorially finite maximal rigid
objects are the same. In the last section, we prove that any
functorially finite maximal rigid subcategory is Gorenstein of
dimension at most $1$. Finally, for two reachable maximal rigid
objects, the corresponding endomorphism algebras have same
representation type.

\section{Relations between cluster-tilting subcategories and maximal rigid subcategories}

Throughout this paper, $k$ denotes an algebraically closed field and
$\C$ denotes a $k-$linear triangulated category whose shift functor
is denoted by $[1]$. We assume that $\C$ is $\Hom-$finite
 and Krull-Remak-Schmidt, i.e. $\dim _k\Hom(X,Y)<\infty $ for any two objects $X$ and $Y$ in $\C$, and every object can be written in a
unique way (up to isomorphism) as a finite direct sum of
indecomposable objects. For basic references on representation
theory of algebras and triangulated categories, we refer [H].
\medskip

 For $X,Y \in \C$ and
$n\in \mathbf{Z}$, we put

$$\Ext^n(X,Y)=\Hom(X, Y[n]).$$

 For a subcategory $\T$, we mean that $\T$ is a full subcategory which
 is closed under taking isomorphisms.  $\T^\bot$ denotes the subcategory consisting of $Y\in C$ with $\Hom(T,Y)=0$ for any $T\in \T$, and
 $^\bot T$ denotes the subcategory consisting of $Y\in C$ with $\Hom(Y,T)=0$ for any $T\in\T$. For an object $T\in C$, add$T$ denotes the subcategory consisting of direct summands of direct
 sums of finite copies of $T$.
\medskip

 For two subcategories $\X, \Y$, we denote $\Ext^1(\X, \Y)=0$ if $\Ext^1(X,Y)=0$  for any $X\in \X, Y\in
  \Y$. $\X*\Y$ denotes the extension category of $\X$ by $\Y$,
  whose objects are by definition the objects $E$ with triangle $X\rightarrow E\rightarrow Y\rightarrow X[1]$, where $X\in\X, Y\in
  \Y$. By the octahedral axiom, we have $(\X*\Y)*\Z=\X*(\Y*\Z)$. We
  call $\X$ extension closed if $\X*\X=\X$.
\medskip

  For $X\in \C$, a morphism
 $f:T\rightarrow X$ is called right $\T-$approximation of $X$ if
 $\Hom(-,f)|_{\T}$ is surjective. If any object $X\in \C$ has a right
 $\T-$approximation, we call $\T$  contravariantly finite in $\C$.
  Left $\T-$approximation and covariantly finiteness are defined
  dually. We say that $\T$ is functorially finite if it is both covariantly finite and contravariantly
  finite. It is easy to see that add$T$ is functorially finite for any object $T\in \C$.

\medskip

 A triangulated category $\C$ is called $2-$CY provided that there are bifunctorial isomorphisms

 $$\Ext^1(X,Y)=D\Ext^1(Y,X)$$ for $X, Y\in \C$, where $D=\Hom_k(-,k)$ is the
 duality of $k-$spaces.
\medskip

  An exact category is called stably $2-$CY [BIRS]
 if it is Frobenius, that is, it has enough projectives and injectives, which coincide, and the stable category is $2-$CY
  triangulated. If a triangulated category is triangulated
  equivalent to the stable category of a stably $2-$CY exact
  category, then we call it algebraic $2-$CY triangulated category [K3].
\medskip

 Examples of stably $2-$CY categories are the categories of Cohen-Macaulay modules over an isolated hypersurface singularity [BIKR];
 the module categories of preprojective algebras of Dynkin
 quivers [GLS]. Basic examples of $2-$CY triangulated categories are the cluster
 categories of abelian hereditary categories with tilting objects [BMRRT, Ke1]; the generalized cluster categories of algebras with
  global dimension of at most $2$
 [Am]; the stable categories of stably $2-$CY categories [BIRS] and
cluster tubes [BKL, BMV].
\medskip

We recall some basic notions [BMRRT, I1, KR, GLS1, BIRS].

\begin{defn}

Let $\T$ be a subcategory of $\C$ which is closed under taking
direct summands and finite direct sums.

\begin{enumerate}

\item $\T$ is called rigid provided $\Ext^1(\T,\T)=0$.

\item $\T$ is called maximal rigid provided $\T$ is rigid and is
maximal with respect to this property, i.e. if $\Ext^1(\T\bigcup\add
M,\T\bigcup \add M)=0$, then $M\in\T$.

\item $\T$ is called cluster-tilting provided $\T$ is functorially
finite and $\T={}^\bot\T[1]$.

\item An object $T$ is called rigid, maximal rigid, or cluster tilting  if $addT$ is rigid, maximal rigid, or cluster tilting respectively.
\end{enumerate}

\end{defn}

\begin{rem}
\begin{enumerate}

\item  Any $2-$CY triangulated category $\C$ admits rigid subcategories ($0$ is viewed as a trivial rigid object), and also admits
maximal rigid subcategories if $\C$ is skeletally small.

\item  There are $2-$CY triangulated categories which contains no
cluster tilting subcategories [BIKR, BMV].

 \item  Cluster tilting subcategories are functorially maximal
rigid subcategories. But the converse is not true in general. It was
observed by Buan-Marsh-Vatne [BIKR, BMV] that the cluster tubes
contain maximal rigid objects, none of them are cluster tilting
objects.

\end{enumerate}
\end{rem}

If $\C$ admits a cluster-tilting subcategory $\T$, we know that
$\C=\T*\T[1]$, i.e. for any object $X$ in $\C$ there is a triangle
$T_1\rightarrow T_0\rightarrow X\rightarrow T_1[1]$ with $T_i\in\T,
i=1,2$ [KR, KZ].
 In fact, the converse is true.

\begin{rem}

If $\C=\T*\T[1]$ with $\T$ a rigid subcategory of $\C$, then $\T$ is
a cluster-tilting subcategory.

\end{rem}

\begin{proof}

Clearly, $\T$ is functorially finite. Given an object $X$ in $\C$
with $\Ext^1(X,\T)=0$, there is a triangle $T_1\rightarrow
T_0\rightarrow X\rightarrow T_1[1]$. Then this triangle splits.
Hence $X\in\T$. This proves that $\T$ is cluster-tilting.

\end{proof}

In general, for a maximal rigid subcategory $\R$, $\R*\R[1]$ is
smaller than $\C$, but all rigid objects belong to $\R*\R[1]$
[BIRS]. The following lemma was proved for Preprojective algebras in
[BMR, GLS1], it holds for any $2-$CY triangulated category.

\begin{lem}

Let $\R$ be a contravariantly finite maximal rigid subcategory in a
$2-$CY triangulated category $\C$. For any rigid object $X\in \C$,
if $Y\s{f}\rightarrow R_0\s{g}\rightarrow X\s{h}\rightarrow Y[1]$ is
a triangle such that $R_0\in\R$ and $g$ is a right
$\R-$approximation of $X$, then $Y\in\R$. Furthermore, there is a
left $\R-$approximation $f_1: X\rightarrow R_2$, which
 extends a triangle $X\s{f_1}\rightarrow R_2\s{g_1}\rightarrow
R_3\s{h_1}\rightarrow X[1]$ with  $R_3\in\R$.

\end{lem}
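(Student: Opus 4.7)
My plan is to establish $Y\in \R$ by showing that $\add(\R\cup\{Y\})$ is rigid; maximality of $\R$ then forces $Y\in \R$. Three $\Ext^1$-vanishings must be checked. For $\Ext^1(R,Y)=0$ with $R\in \R$, apply $\Hom(R,-)$ to the triangle $Y\to R_0\to X\to Y[1]$: since $g$ is a right $\R$-approximation, $\Hom(R,g)$ is surjective, while $\Ext^1(R,R_0)=0$ by rigidity of $\R$, so the long exact sequence kills $\Ext^1(R,Y)$. The dual vanishing $\Ext^1(Y,R)=0$ then follows from the $2$-CY property.

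The heart of the argument is $\Ext^1(Y,Y)=0$, and this is where I expect the main difficulty. My strategy is to show that the connecting map $h_{*}:\Hom(Y,X)\to \Ext^1(Y,Y)$, $\alpha\mapsto h\alpha$, is simultaneously surjective and zero. Surjectivity comes from applying $\Hom(Y,-)$ to the triangle and using $\Ext^1(Y,R_0)=0$ obtained above. For the vanishing, I turn to $\Hom(-,X)$ and invoke the rigidity of $X$: since $\Ext^1(X,X)=0$, the map $f^{*}:\Hom(R_0,X)\to \Hom(Y,X)$ is surjective, so any $\alpha:Y\to X$ factors as $\alpha=\beta f$ for some $\beta:R_0\to X$. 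Then $h\alpha=(h\beta)f$, and $h\beta \in \Hom(R_0,Y[1])=\Ext^1(R_0,Y)=0$ by the first vanishing; hence $h_{*}=0$. Combined with the previous two vanishings, $\add(\R\cup\{Y\})$ is rigid, and maximality gives $Y\in \R$. The delicate point is that this step couples the rigidity of $X$ with the rigidity of $\R$, and it is not visible in any single long exact sequence.

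For the second assertion, I apply the first assertion to the rigid object $X[1]$. Contravariant finiteness of $\R$ furnishes a right $\R$-approximation $h_1:R_3\to X[1]$; completing to a triangle $R_2\to R_3\to X[1]\to R_2[1]$ and invoking the first assertion yields $R_2\in \R$. Rotating produces the desired triangle $X\to R_2\to R_3\to X[1]$ with both middle terms in $\R$. That $f_1:X\to R_2$ is a left $\R$-approximation is then a routine check: applying $\Hom(-,R)$ for $R\in \R$ to this rotated triangle, the obstruction term $\Hom(R_3[-1],R)=\Ext^1(R_3,R)$ vanishes by rigidity of $\R$, so $\Hom(R_2,R)\to \Hom(X,R)$ is surjective.
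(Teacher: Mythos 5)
Your proposal is correct and follows essentially the same route as the paper: both establish $\Ext^1(\R,Y)=0$ from the approximation property and rigidity of $\R$, then combine the $\Hom(Y,-)$ and $\Hom(-,X)$ long exact sequences with $\Ext^1(X,X)=0$ to kill $\Ext^1(Y,Y)$, conclude $Y\in\R$ by maximality, and obtain the second assertion by applying the first to $X[1]$ and rotating. The only cosmetic difference is that you show the connecting map $h_*$ is zero via $\Ext^1(R_0,Y)=0$, whereas the paper factors $\beta$ through the approximation $g$ and uses $hg=0$ --- two readings of the same exact sequence.
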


\begin{proof}

Since $g$ is a right $\R-$approximation of $X$ and $\Ext^1(R,R_0)=0$
for any objects $R$ in $\R$, we have that
  $\Ext^1(\R,\add Y)=0$, in particular, we have $\Ext^1(R_0,Y)=0$.

By applying $\Hom(-,X)$ and $\Hom(Y,-)$ to the triangle
$Y\s{f}\rightarrow R_0\s{g}\rightarrow X\s{h}\rightarrow Y[1]$ we
have two exact sequences:

\begin{equation*}
\Hom(R_0,X)\s{\Hom(f,X)}\Longrightarrow\Hom(Y,X)\s{}\Longrightarrow
\Ext^1(X,X)=0
\end{equation*}
and
\begin{equation*}
\Hom(Y,R_0)\s{\Hom(Y,g)}\Longrightarrow\Hom(Y,X)\s{}\Longrightarrow
\Ext^1(Y,Y)\Longrightarrow\Ext^1(Y,R_0)=0.
\end{equation*}

Let $\alpha$ be an element of $\Hom(Y,X)$. By the first exact
sequence there is a $\beta\in\Hom(R_0,X)$ such that $\alpha=\beta
f$. Since $g$ is a right $\R-$approximation of $X$, there is a
$\gamma\in \Hom(R_0,R_0)$ such that $\beta=g\gamma$. Then
$\alpha=g\gamma f$. This shows that $\Hom(Y,g)$ is surjective. Hence
$\Ext^1(Y,Y)=0$ by the second exact sequence. It follows that $Y\in
\R$.

For the second part, we apply the first part to the rigid object
$X[1]$. There is a triangle $R_3\s{f_1}\rightarrow
R_4\s{g_1}\rightarrow X[1]\s{h_1}\rightarrow R_4[1]$. Then we have a
triangle $X\s{-h_1[-1]}\rightarrow R_4\rightarrow R_3\rightarrow
X[1]$. It is easy to see $-h_1[-1]$ is a left $\R-$approximation of
$X$.

\end{proof}

There is a dual statement for covariantly finite maximal rigid
subcategory $\R$, we leave it to the reader.

By Lemma 2.1, we have result which is the second part of Proposition
I.1.7 in [BIRS].

\begin{cor}

Let $\R$ be a functorially finite maximal rigid subcategory. Then
every rigid object belongs to $\R*\R[1]$.

\end{cor}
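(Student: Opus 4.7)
The plan is essentially to feed the rigid object $X$ into Lemma 2.1 and then rotate the resulting triangle to land in the correct slot of the extension category $\R*\R[1]$. Since $\R$ is functorially finite, in particular contravariantly finite, any rigid object $X\in\C$ admits a right $\R$-approximation $g\colon R_0\to X$, which I would complete to a triangle
\[
Y\s{f}\rightarrow R_0\s{g}\rightarrow X\s{h}\rightarrow Y[1].
\]
Lemma 2.1 applies verbatim and gives $Y\in\R$.

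Now I would rotate this triangle one step to obtain
\[
R_0\s{g}\rightarrow X\s{h}\rightarrow Y[1]\s{-f[1]}\rightarrow R_0[1].
\]
This exhibits $X$ as an extension of $Y[1]\in\R[1]$ by $R_0\in\R$, so $X\in\R*\R[1]$ directly from the definition of the extension subcategory recalled in the preliminaries.

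There is really no obstacle: the substantive work was done in Lemma 2.1, where contravariant finiteness of $\R$ together with the 2-CY property was used to upgrade $Y$ from an arbitrary cone into an object of $\R$. The corollary only repackages that conclusion as a statement about membership in $\R*\R[1]$. The one mild subtlety worth noting in the write-up is that the triangle produced by Lemma 2.1 has $Y$ in the first slot and $X$ in the third, whereas the defining triangle for $\R*\R[1]$ requires an object of $\R$ in the first slot and an object of $\R[1]$ in the third; this is resolved by a single rotation, using that the class of triangles is closed under the shift axiom. No use of the covariantly finite half of functorial finiteness is needed here, though it justifies that Lemma 2.1 could equally have been invoked in its dual form.
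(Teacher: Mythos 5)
Your proof is correct and follows exactly the route the paper intends: the corollary is stated as an immediate consequence of the preceding lemma, and your rotation of the triangle $Y\to R_0\to X\to Y[1]$ (with $Y\in\R$ supplied by the lemma) into $R_0\to X\to Y[1]\to R_0[1]$ is precisely what places $X$ in $\R*\R[1]$ under the paper's definition of the extension subcategory. Your remark that only contravariant finiteness is used is also accurate.
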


One can see that any cluster-tilting subcategory is maximal rigid,
but the converse is not true [BIKR, BMV]. The main result of this
section is the following theorem which tells us that the
cluster-tilting and maximal rigid can not really coexist. This is a
generalization of Theorem II.1.8 in [BIRS], where the same
conclusion was proved for algebraic $2-$CY triangulated categories.

\begin{thm}

If $\C$ admits a cluster-tilting subcategory $\T$, then every
functorially finite maximal rigid subcategory is cluster-tilting.

\end{thm}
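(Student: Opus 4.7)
The plan is to show $\C = \R * \R[1]$, which by Remark~2.1 (together with the rigidity and functorial finiteness of $\R$) implies that $\R$ is cluster-tilting. Equivalently, I will show that every $X \in \C$ with $\Ext^1(X,\R) = 0$ lies in $\R$. The main inputs are Corollary~2.1, which gives $\T \subseteq \R * \R[1]$ (since $\T$ consists of rigid objects), and the cluster-tilting decomposition $\C = \T * \T[1]$.

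Fix $X \in \C$ with $\Ext^1(X,\R) = 0$. Since $\R$ is contravariantly finite, choose a right $\R$-approximation $g \colon R \to X$ and complete to a triangle $Y \to R \to X \to Y[1]$. A standard diagram chase with the long exact sequence of $\Hom(R',-)$ for $R' \in \R$ shows $\Ext^1(\R, Y) = 0$, and 2-CY duality yields $\Ext^1(Y,\R) = 0$. If $Y$ is rigid (the crucial step, addressed next), then $\R \cup \add(Y)$ is rigid by the vanishings just established, so $Y \in \R$ by the maximal rigidity of $\R$. Since then $\Ext^1(X,Y) \subseteq \Ext^1(X,\R) = 0$, the triangle splits, exhibiting $X$ as a direct summand of $R \in \R$, and thus $X \in \R$.

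To prove rigidity of $Y$, write the cluster-tilting triangle $T'_1 \to T'_0 \to Y \to T'_1[1]$ with $T'_0, T'_1 \in \T$, and fix $\R * \R[1]$-presentations of $T'_0$ and $T'_1$ via Corollary~2.1. Assembling the long exact sequences of $\Hom(-,Y)$ and $\Hom(Y,-)$ applied to these triangles, and using 2-CY duality to convert the intervening $\Ext^i$-terms, the established vanishings $\Ext^1(Y,\R) = \Ext^1(\R,Y) = 0$ propagate through the diagrams to force $\Ext^1(Y,Y) = 0$.

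\textbf{Main obstacle.} The essential difficulty is precisely the rigidity of $Y$: although $Y$ is rigid relative to $\R$ by construction, promoting this to self-rigidity requires assembling the cluster-tilting triangle of $Y$ with the $\R * \R[1]$-presentations of its $\T$-factors, and carefully tracking the resulting long exact sequences and 2-CY dualities so that only already-vanishing terms contribute. This is precisely where the hypothesis that $\C$ admits a cluster-tilting subcategory is used; in particular, the argument fails in the absence of such a $\T$, which is consistent with the known existence of 2-CY triangulated categories (for instance cluster tubes) in which maximal rigid subcategories are strictly weaker than cluster-tilting.
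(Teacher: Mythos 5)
Your reduction is sound as far as it goes: the vanishings $\Ext^1(\R,Y)=0$ and, by the $2$-CY property, $\Ext^1(Y,\R)=0$ for the cocone $Y$ of a right $\R$-approximation of $X$ are correctly established, and if $Y$ were rigid then maximality would give $Y\in\R$ and the splitting argument would put $X$ in $\R$. But the crucial step --- the rigidity of $Y$ --- is not proved, and the sketch offered for it does not work. First, note that the reduction makes no real progress: $Y$ satisfies exactly the same hypothesis as $X$ (namely $\Ext^1(Y,\R)=0$; the condition $\Ext^1(\R,Y)=0$ is automatic by $2$-CY duality and holds for $X$ too), and since $\R$ is maximal rigid, ``$Y$ is rigid'' is equivalent to ``$Y\in\R$'', which is precisely the statement you set out to prove, now for $Y$ instead of $X$. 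Second, the proposed long-exact-sequence chase does not close this gap: from the cluster-tilting triangle $T_1'\to T_0'\to Y\to T_1'[1]$ one gets $\Hom(Y,T_0'[1])\to\Hom(Y,Y[1])\to\Hom(Y,T_1'[2])$, and the outer terms do not vanish. Indeed $\Hom(Y,T_1'[2])\cong D\Hom(T_1',Y)$ by Serre duality, and feeding in an $\R*\R[1]$-presentation $R_1\to R_0\to T_0'\to R_1[1]$ only embeds $\Ext^1(Y,T_0')$ into $\Ext^2(Y,R_1)\cong D\Hom(R_1,Y)$; neither dual Hom-space is zero in general. So nothing ``propagates''; one would have to control the actual maps, and no such control is supplied. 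The mechanism of Lemma 2.4 is also unavailable here, since there the surjectivity of $\Hom(R_0,X)\to\Hom(Y,X)$ comes from $\Ext^1(X,X)=0$, and your $X$ is not assumed rigid.

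The paper's proof avoids forming the cocone $Y$ altogether. It takes the cluster-tilting triangle $T_1\s{f}\rightarrow T_0\s{g}\rightarrow X\rightarrow T_1[1]$ for $X$ itself and applies Lemma 2.4 where it legitimately applies, namely to the rigid objects $T_0$ and $T_1$: a left $\R$-approximation $\alpha\colon T_0\to R$ has cone $R_0\in\R$; the hypothesis $\Ext^1(X,\R)=0$ shows that $\alpha f$ is a left $\R$-approximation of $T_1$, whence its cone $R_1$ also lies in $\R$; and the octahedron on the composition $\alpha f$ yields a triangle $X[-1]\to R_1[-1]\to R_0[-1]\to X$, which splits because $\Hom(R_0[-1],X)=\Ext^1(R_0,X)\cong D\Ext^1(X,R_0)=0$. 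Hence $X\in\R$. To repair your argument you must either prove the rigidity of $Y$ by a genuine argument of this kind, or abandon the detour through $Y$ and argue directly on $X$ as above.
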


\begin{proof}

Assume that $\R$ is a functorially finite maximal rigid subcategory
in $\C$. Given an object $X\in\C$ satisfying $\Ext^1(\add X,\R)=0$,
we have a triangle

\begin{equation*}
T_1\s{f}\Longrightarrow T_0\s{g}\Longrightarrow X\Longrightarrow
T_1[1]
\end{equation*}

where $T_0$ and $T_1$ belong to $\T$. Since $\R$ is functorially
finite in $\C$, there is a left $\R-$approximation of $T_0$ which
extends to a triangle by Lemma 2.4,

\begin{equation*}
R_0[-1]\Longrightarrow T_0\s{\alpha}\Longrightarrow R\Longrightarrow
R_0,
\end{equation*}
where $R,R_0\in \R.$

 Let $\alpha_1=\alpha f$. For any object $Z$ in
$\R$, by applying $\Hom(-,Z)$ to the triangle $T_1\s{f}\rightarrow
T_0\s{g}\rightarrow X\rightarrow T_1[1]$, we have the exact sequence

\begin{equation*}
\Hom(T_0,Z)\s{\Hom(f,Z)}\Longrightarrow
\Hom(T_1,Z)\s{}\Longrightarrow \Ext^1(X,Z)=0.
\end{equation*}

Given an element $\varphi_1\in\Hom(T_1,Z)$, there is a
$\varphi_0\in\Hom(T_0,Z)$ such that $\varphi_1=\varphi_0 f$. Since
$\alpha$ is a left $\R-$approximation of $T_0$, there is a $\psi$
such that $\varphi_0=\psi\alpha$. Then $\varphi_1=\psi\alpha
f=\psi\alpha_1$. So $\alpha_1$ is a left $\R-$approximation of
$T_1$. It follows from Lemma 2.4 that the triangle which $\alpha_1$
is a part is of the form:

\begin{equation*}
R_1[-1]\Longrightarrow T_1\s{\alpha_1}\Longrightarrow
R\Longrightarrow R_1,
\end{equation*}
where $R,R_1\in \R.$

Starting with $\alpha_1=\alpha f$, we get the following commutative
diagram by the octahedral axiom:

$$\begin{array}{ccccccc}
X[-1]&=&X[-1]\\
\downarrow&&\downarrow\\
R_1[-1]&\s{}\longrightarrow&T_1&\s{\alpha_1}\longrightarrow&R&\s{}\longrightarrow&R_1\\
\downarrow&&\downarrow f&&\parallel&&\downarrow\\
R_0[-1]&\s{}\longrightarrow&T_0&\s{\alpha}\longrightarrow&R&\s{}\longrightarrow&R_0\\
\downarrow&&\downarrow g\\
X&=&X

\end{array}$$

But $\Hom(R_0[-1],X)=\Ext^1(R_0,X)=0$, so the first column is a
split triangle and then $X\in\R$. Thus we have proved this theorem.

\end{proof}

\begin{rem}
The same conclusion is not true in arbitrary triangulated
categories. See the example in Section 2 in [BMRRT], where the
derived category of the quiver $Q:1\rightarrow 2\rightarrow 3$
contains a functorially finite maximal rigid subcategory which is
not cluster tilting. It is well-known that the derived category of
$Q$ contains cluster tilting subcategories, see for example the
example in Section 5 of [KZ], or [I2].
\end{rem}

\section{Mutations and basis of Grothendieck groups of maximal rigid subcategories}

Mutations in arbitrary triangulated categories were defined in [IY].
We recall them in the setting of $2-$CY triangulated categories.
\medskip

Let $\C$ be a $2-$CY triangulated category and $\D$ a functorially
finite rigid
 subcategory of $\C$ which is closed under taking finite direct sums and direct summands. For any subcategory
$\X\supset \D$, put

\begin{center}
$\mu ^{-1}(\X, \D):=(\D\ast \X [1])\cap {}^{\bot}(\D [1])$.
\end{center}

Dually, for a subcategory $\Y\supset \D,$ put

\begin{center}
  $\mu (\Y, \D):=(\Y[-1]\ast \D)\cap (\D [-1])^{\bot}$.\end{center}

\begin{defn} The pair $(\X,\Y)$ of subcategories $\X, \Y$ is called
$ \D-$mutation pair if $\X = \mu (\Y, \D)$ and $\Y=\mu ^{-1}(\X,
\D)$ [IY].

\end{defn}

It is not difficult to see that: for subcategories $\X, \Y$
containing $\D$, $(\X,\Y)$ forms a $\D-$mutation if and only if for
any $X\in \X$, $Y_1\in \Y$ there are two triangles:

\begin{equation*}
X\s{f}{\rightarrow}D\s{g}{\rightarrow}Y\rightarrow X[1],
\end{equation*}
\begin{equation*}
X_1\s{f_1}{\rightarrow}D_1\s{g_1}{\rightarrow}Y_1\rightarrow
X_1[1]\end{equation*}

where $D,D_1\in \D$, $Y\in \Y$, $X_1\in \X$,  $f$ and $f_1$ are left
$\D-$approximations; $g$ and $g_1 $ are right $\D-$approximations.
\medskip

The following result is analogous to the first part of Theorem 5.1
in [IY], where the arguments are stated for cluster tilting
subcategories. We give a proof here for the convenience of the
reader.

\begin{prop} Let $\R$ be a functorially finite maximal rigid subcategory containing $\D$
as a subcategory. Then its mutation $\R'=\mu ^{-1}(\X, \D)$ is a
functorially finite maximal rigid subcategory, and  $(\R, \R')$ is a
$\D-$mutation pair. \end{prop}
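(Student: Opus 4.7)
The plan is to verify in turn that $\R'$ is rigid, that $\D \subseteq \R'$, that $\R'$ is maximal rigid, that $\mu(\R', \D) = \R$ (which together with the definition $\R' = \mu^{-1}(\R, \D)$ gives the mutation pair), and that $\R'$ is functorially finite. A preliminary observation is that any $Y \in \R' = (\D * \R[1]) \cap {}^{\bot}(\D[1])$ admits a triangle $R \xrightarrow{f} D \xrightarrow{g} Y \to R[1]$ with $R \in \R$, $D \in \D$, in which $f$ is automatically a left $\D$-approximation of $R$ (since $\Ext^1(Y, \D) = 0$) and $g$ is automatically a right $\D$-approximation of $Y$ (since $\D \subseteq \R$ rigid gives $\Ext^1(\D, R) = 0$). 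This attaches canonical approximations to every object of $\R'$, and makes $\D \subseteq \R'$ immediate via trivial triangles.

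For rigidity of $\R'$: given $Y, Y' \in \R'$, apply $\Hom(-, Y')$ to the triangle for $Y$. By 2-CY, $\Ext^1(D, Y') = 0$ (from $\Ext^1(Y', D) = 0$), so vanishing of $\Ext^1(Y, Y')$ reduces to surjectivity of $\Hom(D, Y') \to \Hom(R, Y')$. Given $\phi : R \to Y'$, factor $\phi$ through the $\D$-object $D'$ appearing in the triangle for $Y'$ (using $\Ext^1(R, R') = 0$), and then use the left $\D$-approximation property of $f$ to lift the resulting morphism $R \to D'$ through $f$, yielding the required factorization.

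Maximality is the central step. Let $M$ be rigid with $\Ext^1(\R' \cup \add M, \R' \cup \add M) = 0$. By Corollary 2.5, $M \in \R * \R[1]$, so fix a triangle $R_1 \xrightarrow{f} R_0 \to M \to R_1[1]$ with $R_i \in \R$. Take a left $\D$-approximation $\alpha : R_0 \to D_0$ and complete to $R_0 \xrightarrow{\alpha} D_0 \to R_0^* \to R_0[1]$; one checks directly that $R_0^* \in \R'$. Using $\Ext^1(M, \D) = 0$ (from $\D \subseteq \R'$), the composition $\alpha \circ f : R_1 \to D_0$ is again a left $\D$-approximation, giving a triangle $R_1 \xrightarrow{\alpha f} D_0 \to C \to R_1[1]$ with $C \in \R'$. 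The octahedral axiom applied to the composable pair $(f, \alpha)$ now produces a triangle $M \to C \to R_0^* \to M[1]$. Since $\Ext^1(R_0^*, M) = 0$ by hypothesis, the connecting map $R_0^* \to M[1]$ vanishes and the triangle splits, so $C \cong M \oplus R_0^*$; closure of $\R'$ under direct summands then yields $M \in \R'$.

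For the identity $\mu(\R', \D) = \R$: every $R \in \R$ sits in the rotated triangle $R^*[-1] \to R \to D \to R^*$ from a left $\D$-approximation of $R$, exhibiting $R \in \R'[-1] * \D$, while $\Hom(\D[-1], R) = \Ext^1(\D, R) = 0$ by rigidity. Conversely, given $Z \in \mu(\R', \D)$ with triangle $Z \to D \to Y \to Z[1]$ and $Y \in \R'$, the same approximation analysis forces $Z$ rigid, and for each $R \in \R$ the mutation triangle $R \to D_R \to R^* \to R[1]$ with $R^* \in \R'$ combined with the right $\D$-approximation property of $D \to Y$ yields $\Ext^1(\R, Z) = 0$; maximality of $\R$ then forces $Z \in \R$. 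Functorial finiteness of $\R'$ is handled by a parallel octahedral construction, combining right/left $\R$-approximations of an arbitrary object with the $\D$-approximation triangles above. The chief obstacle is the octahedral maximality step, where one must verify the auxiliary left $\D$-approximation property of $\alpha \circ f$ and then exploit $\Ext^1(R_0^*, M) = 0$ to secure the splitting.
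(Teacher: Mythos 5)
Your argument is essentially correct, but it takes a genuinely different route from the paper. The paper's proof passes to the Iyama--Yoshino subquotient $\U=\Z/\D$ with $\Z={}^\bot\D[1]=\D[-1]^\bot$, shows that a subcategory of $\Z$ containing $\D$ is functorially finite maximal rigid in $\C$ iff its image is so in $\U$, and then observes that $\underline{\R'}=\underline{\R}\langle1\rangle$ is the shift of a maximal rigid subcategory in $\U$, hence maximal rigid; everything (rigidity, maximality, functorial finiteness, the mutation-pair identity) is inherited at once from the equivalence $\langle1\rangle$. You instead work entirely inside $\C$: you extract the canonical left/right $\D$-approximation triangles attached to objects of $\R'$, prove rigidity by a direct factorization, and prove maximality by combining Corollary 2.5 ($M\in\R*\R[1]$) with an octahedron on $R_1\xrightarrow{f}R_0\xrightarrow{\alpha}D_0$ and the splitting forced by $\Ext^1(R_0^*,M)=0$. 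Your computations check out (the verification that $\alpha f$ is again a left $\D$-approximation, that $R_0^*, C\in\R'$, and the two inclusions $\R\subseteq\mu(\R',\D)\subseteq\R$ via the Lemma~2.4-style argument are all correct). What your approach buys is self-containedness and explicitness -- no appeal to the triangulated structure of $\Z/\D$; what the paper's approach buys is that all the closure and finiteness properties come for free from an equivalence of categories rather than being re-proved one at a time.

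Two spots in your write-up need more care. First, in the maximality step you conclude $M\in\R'$ from $M\oplus R_0^*\cong C\in\R'$, which requires $\R'=(\D*\R[1])\cap{}^\bot(\D[1])$ to be closed under direct summands; this is true, but only because $\Hom(\D,\R[1])=\Ext^1(\D,\R)=0$ forces $\D*\R[1]$ to be closed under summands ([IY, Prop.~2.1]), and you should say so since summand-closure is itself part of what must be verified for $\R'$ to qualify as a subcategory in the sense of Definition~2.1. Second, functorial finiteness is only gestured at ("a parallel octahedral construction"); the actual construction of, say, a right $\R'$-approximation of an arbitrary $X\in\C$ requires first passing to a right $\Z$-approximation, then shifting a right $\R$-approximation through a $\D$-approximation triangle, and finally adjoining a right $\D$-approximation of $X$ to absorb the morphisms factoring through $\D$. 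This is routine but not a one-liner, and as written it is the one genuinely incomplete step of your proof.
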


\begin{proof} Let $\Z={}^\bot\D[1]=\D[-1]^\bot$, and  $\U:=\Z/\D$ the quotient triangulated category,
whose shift functor is denoted by $<1>$ [IY]. The images of the
object $X$ and the
 subcategory $\X$ in the quotient $\U$ are denoted by $\underline{X}$ and $\underline{\X}$ respectively.

  We first sketch the proof of the fact $\X$ is a functorially finite
maximal rigid subcategory in $\C$ if and only if so $\underline{\X}$
is in $\U$.

 It is easy to see that $\X$ is functorially finite if and only if  $\underline{\X}$ is so.

  We will prove that $\underline{\X}$ is maximal rigid
in $\U$ provided $\X$ is maximal rigid in $\C$.

 Let $M\in \U$ satisfy that $\Hom(M, X<1>)=0, \Hom(X,M<1>)=0,
\Hom(M,M<1>)=0$, for any $X\in \underline{\X}$. For $X\in \X$, we
have a triangle
\begin{equation*}
X\s{f}\Longrightarrow D\s{g}\Longrightarrow X<1>\s{h}\Longrightarrow
X[1].
\end{equation*}

Now suppose that $\alpha\in \Hom(M,X[1])$. Since $\Hom(M,D[1])=0$,
$\alpha$ factors through $h$ by $\beta: M\rightarrow X<1>$. Since
$\Hom(M,X<1>)=0$ in $\U$, we have that $\beta$ factors through $g$.
Then $\alpha=0$. This proves that $\Hom(M,X[1])=0$. One can prove
that $\Hom(X[1],M)=0, \Hom(M,M[1])=0$ in a similar way. Then
$\underline{\X}$ is maximal rigid in $\U$. The converse implication
can be proved in a similar way. We omit the details here.

It follows from the fact $(\R,\R')$ is a $\D-$mutation in $\C$ that
$(\underline{\R}, \underline{\R'})$ is a $0-$mutation in $\U$. Then
$\underline{\R'}= \underline{\R}<1>$ in $\U$, and then
$\underline{R'}$ is maximal rigid in $\U$. It follows that $\R'$ is
 maximal rigid in $\C$.

\end{proof}
\medskip

We call a subcategory $\R_1$ an almost complete maximal rigid
subcategory if there is an indecomposable object $R$ which is not
isomorphic to any object in $\R_1$ such that $\R=\add(\R_1\bigcup
\{R\})$ is a functorially finite maximal rigid subcategory in $\C$.
 Such $R$ is called a complement of an almost complete maximal rigid subcategory
$\R_1$. It is easy to see that any almost complete maximal rigid
subcategory is functorially finite. Combining the proposition above
with Corollary 2.5, we have the following corollary, which was
indicated in [BIRS].

\begin{cor}
Let $\R_1$ be an almost complete maximal rigid subcategory of $\C$.
Then there are exactly two complements of $\R_1$, say $R$ and $R^*$.
Denote by $\R=\add({\R_1}\bigcup \{R\})$, $\R'=\add({\R_1}\bigcup
\{R^*\})$. Then $(\R,\R'), (\R',\R)$ are $\R_1-$mutations.
\end{cor}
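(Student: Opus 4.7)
The plan is to apply Proposition 3.2 with $\D = \R_1$ to the functorially finite maximal rigid subcategory $\R = \add(\R_1 \cup \{R\})$. Since $\R_1$ is itself functorially finite and rigid, the proposition produces a functorially finite maximal rigid subcategory $\R' := \mu^{-1}(\R, \R_1)$ together with the $\R_1$-mutation pair structure on $(\R, \R')$; the symmetric form of Definition 3.1 then delivers $(\R', \R)$ as an $\R_1$-mutation pair automatically.

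Next I would identify $\R'$ as $\add(\R_1 \cup \{R^*\})$ for some indecomposable $R^*$. First the inclusion $\R_1 \subseteq \R'$ follows directly: for $D \in \R_1$, the trivial triangle $D \to D \to 0 \to D[1]$ places $D$ in $\R_1 * \R[1]$, and rigidity of $\R_1$ gives $D \in {}^\bot \R_1[1]$. Then I would pass to the quotient $\U = \Z/\R_1$ used in the proof of Proposition 3.2, in which $\R_1$ collapses to zero, so $\underline{\R} = \add\{\underline R\}$ and $\underline{\R'} = \underline{\R}<1> = \add\{\underline R<1>\}$. Since $\underline R<1>$ is the shift of an indecomposable in the triangulated category $\U$, it is indecomposable; lifting back to $\C$ shows that $\R'$ contains, up to isomorphism, a unique indecomposable outside $\R_1$, which I would call $R^*$.

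Third, I would rule out $R^* \cong R$. The $\R_1$-mutation pair supplies a triangle $R \xrightarrow{f} D_0 \to R^* \to R[1]$ with $D_0 \in \R_1$ and $f$ a left $\R_1$-approximation. If $R^* \cong R$, the connecting morphism would lie in $\Ext^1(R,R) = 0$, the triangle would split, and $R$ would be a direct summand of $D_0 \in \R_1$. Closedness of $\R_1$ under summands would then force $R \in \R_1$, contradicting that $R$ is a complement.

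For the uniqueness of complements, given any other complement $\tilde R$, the category $\add(\R_1 \cup \{\tilde R\})$ is maximal rigid and its image in $\U$ is the additive closure of the indecomposable $\underline{\tilde R}$, which must be maximal rigid in $\U$. The main obstacle, and the technical heart of the argument, is showing that $\U$ admits only two indecomposable maximal rigid objects up to isomorphism, namely $\underline R$ and $\underline R<1>$. To handle this I would apply Corollary 2.5 to the rigid object $\tilde R$ to place it inside $\R * \R[1]$, separate the $\R_1$ and $R$ components in the outer terms of the resulting triangle, and push the data through the quotient functor $\Z \to \U$ to force $\underline{\tilde R}$ into $\{\underline R, \underline R<1>\}$; this yields $\tilde R \cong R$ or $\tilde R \cong R^*$, completing the corollary.
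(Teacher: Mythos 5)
Your overall strategy is sound and is in fact close to the argument the paper is implicitly invoking: the paper's own proof is a one-line reduction to [IY, Theorem 5.3], observing that Iyama--Yoshino's arguments for cluster tilting subcategories go through for functorially finite maximal rigid ones once Corollary 2.5 is available. Your first three steps (producing $\R'=\mu^{-1}(\R,\R_1)$ via Proposition 3.2, identifying it in the quotient $\U=\Z/\R_1$ as $\add(\R_1\cup\{R^*\})$ with $\underline{R^*}=\underline{R}\langle 1\rangle$, and ruling out $R^*\cong R$ by splitting the approximation triangle) are essentially correct, modulo the small point that the third term of the mutation triangle for $R$ is a priori $R^*\oplus D'$ with $D'\in\R_1$ rather than $R^*$ itself --- the splitting argument still works since $\Ext^1(D',R)=0$.

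The genuine gap is in the last step, which you yourself identify as the technical heart and then do not carry out. Knowing that $\underline{\tilde R}$ sits in a triangle $\underline{R}^a\to\underline{R}^b\to\underline{\tilde R}\to\underline{R}^a\langle 1\rangle$ does not by itself ``force'' $\underline{\tilde R}$ to be $\underline{R}$ or $\underline{R}\langle 1\rangle$: one must rule out the case $a,b>0$, and that is exactly where all the content lies. The standard mechanism is the Dehy--Keller index argument (Proposition 3.6 and Theorem 3.7 of this paper, whose proofs depend only on Corollary 2.5 and [DK], not on this corollary): taking the triangle minimal, $\mathrm{ind}_{\add\underline{R}}(\underline{\tilde R})=(b-a)[\underline{R}]$ must be a basis element of $K_0^{split}(\add\underline{R})\cong\mathbb{Z}$, so $b-a=\pm 1$, and since the index determines a rigid object up to isomorphism, $\underline{\tilde R}\cong\underline{R}$ or $\underline{R}\langle 1\rangle$. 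Without invoking this (or reproducing Iyama--Yoshino's direct homological argument), the uniqueness of the two complements is unproved. A secondary technical point: the triangle from Corollary 2.5 lives in $\C$ and involves $[1]$, so it does not descend verbatim to a triangle in $\U$ involving $\langle 1\rangle$; you should instead apply Corollary 2.5 inside $\U$ itself, using that $\U$ is again $2$-CY and that $\add\underline{R}$ is functorially finite maximal rigid there (both established in the proof of Proposition 3.2 and in [IY]).
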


\begin{proof} This follows from [IY, 5.3]. Note that the arguments
there are stated only for cluster tilting subcategories, but work
also for functorially finite maximal rigid subcategories with the
help of Corollary 2.5.

\end{proof}

\begin{defn} Let $\R_1$ be an almost complete maximal rigid subcategory of $\C$
with the complements $R$ and $R^*$. Denote $\R=\add(\R_1
\bigcup\{R\})$, $\R'=\add(\R_1 \bigcup\{R^*\})$. If
$\dim_k\Hom(R,R^*[1])=1$, then the mutation $(\R,\R')$ is called a
 simple mutation [Pla].
\end{defn}

There are mutations of some maximal rigid objects in cluster tubes
which are not simple [Y]. It was proved in [Pla] that for a simple
mutation of maximal rigid subcategories $\R, \R'$,
$\R*\R[1]=\R'^*\R'[1]$.

\medskip

Let $\R$ be a functorially maximal rigid subcategory of $2-$CY
triangulated category $\C$. Let $K_0^{split}(\R)$ be the (split)
Grothendieck group, which by definition, the free abelian group with
a basis $[R]$, where $R$ runs from the representatives of
isomorphism classes of indecomposable objects in $\R$.  Let $X$ be a
rigid object of $\C$. By the corollary 2.5 above, there is a
triangle $\R_1\rightarrow\R_0\rightarrow X\rightarrow R_1[1]$. So we
can define the index $\ind_\R(X)=[R_0]-[R_1]\in K_0^{split}(\R)$ as
in [Pa, DK, Pla].

\begin{prop}

Let $\R$ be a functorially finite maximal rigid subcategory.

(1) If $X$ and $Y$ are rigid objects in $\C$ such that
$ind_\R(X)=ind_\R(Y)$, then $X$ and $Y$ are isomorphic.

(2) Let $X$ be a rigid object of $\C$ and let $X_i, i\in I$, be a
finite family of pairwise nonisomorphic indecomposable direct
summands of $X$. Then the elements $\ind_\R(X), i\in I$, are
linearly independent in $K_0^{split}(\R)$.

\end{prop}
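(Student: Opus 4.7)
The strategy is to adapt the arguments of Dehy--Keller [DK] and Palu [Pa], which establish the analogous statements when $\R$ is cluster tilting, to the present setting. The key observation is that their proofs use the cluster tilting hypothesis only to guarantee the existence of $\R$-approximation triangles for rigid objects, and in our setup this is exactly what Corollary 2.5 provides.

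For (1), fix rigid objects $X$ and $Y$ with $\ind_\R(X)=\ind_\R(Y)$ and choose approximation triangles
\begin{equation*}
R_1\xrightarrow{f}R_0\xrightarrow{g}X\to R_1[1],\qquad R_1'\xrightarrow{f'}R_0'\xrightarrow{g'}Y\to R_1'[1],
\end{equation*}
with $R_i,R_i'\in\R$ and $g,g'$ right minimal right $\R$-approximations. The plan is to show in two steps that these triangles are isomorphic, forcing $X\cong Y$. First, I would show that $R_0$ and $R_1$ are determined up to isomorphism by $X$: $R_0$ is fixed by Krull--Schmidt and right minimality of $g$, while applying $\Hom_\C(\R,-)$ to the triangle and using $\Ext^1(\R,\R)=0$ exhibits the restricted-Yoneda module $\Hom_\C(\R,X)$ as the cokernel of a minimal projective presentation, so $R_1$ is determined as well. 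Rigidity of $X$ combined with $\Ext^1(\R,\R)=0$ and the radicality of $f$ (forced by minimality) rules out any common indecomposable direct summand of $R_0$ and $R_1$; and the same for $R_0',R_1'$. The identity $[R_0]-[R_1]=[R_0']-[R_1']$ in $K_0^{split}(\R)$ then gives $R_0\oplus R_1'\cong R_0'\oplus R_1$, and Krull--Schmidt cancellation together with the disjointness just established yields $R_0\cong R_0'$ and $R_1\cong R_1'$. Second, choosing compatible isomorphisms at the $R$-levels (matched so that $\phi_0 f=f'\phi_1$, which is possible because both pairs $(f),(f')$ realize the same minimal projective presentation), axiom (TR3) provides a morphism of triangles whose third component $X\to Y$ is an isomorphism by the triangulated five-lemma.

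Part (2) reduces to (1) by an additivity argument. Suppose a relation $\sum_{i\in I}n_i\ind_\R(X_i)=0$ holds with not all $n_i=0$. Separating indices by sign, set
\begin{equation*}
X'=\bigoplus_{n_i>0}X_i^{n_i},\qquad X''=\bigoplus_{n_i<0}X_i^{-n_i},
\end{equation*}
both of which are direct summands of some power of $X$ and hence rigid. Taking direct sums of the defining triangles shows $\ind_\R(A\oplus B)=\ind_\R(A)+\ind_\R(B)$, so $\ind_\R(X')=\ind_\R(X'')$; by (1), $X'\cong X''$, contradicting Krull--Schmidt since the $X_i$ are pairwise non-isomorphic indecomposables appearing on opposite sides of the relation.

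The main obstacle is the first step of (1): showing that $R_0$ and $R_1$ (and their disjointness of summands) are genuine invariants of the rigid object $X$, and then matching these invariants compatibly with the maps $f,f'$ across $X$ and $Y$. In the cluster tilting case this is a consequence of the essential injectivity of the restricted-Yoneda functor $\C\to\mathrm{mod}(\R)$ on rigid objects; in the maximal rigid setting that functor is less well-behaved on objects outside $\R*\R[1]$, so I would carry out the arguments by hand, using the rigidity of $X$, $\Ext^1(\R,\R)=0$, and the 2-CY duality in place of any black-box module-theoretic input. Once this technical point is settled, the remainder of the proof is routine manipulation with the triangulated structure.
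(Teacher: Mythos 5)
Your overall strategy is exactly the paper's: the published proof is a one-line citation of [DK, Sections 2.1--2.3, 2.5] together with the remark that those arguments only need every rigid object to admit $\R$-approximation triangles, which Corollary 2.5 supplies. Your part (2) --- additivity of the index and the reduction to (1) via $X'$ and $X''$ --- is correct and matches [DK]. The no-common-summand claim and the determination of $R_0$ and $R_1$ up to isomorphism are also the right intermediate steps.

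The gap is in the final step of (1). Having identified $R_0\cong R_0'$ and $R_1\cong R_1'$, you are left with two radical morphisms $f,f'\colon R_1\to R_0$ whose cones are $X$ and $Y$, and you need $\phi_0\in\Aut(R_0)$, $\phi_1\in\Aut(R_1)$ with $\phi_0 f=f'\phi_1$. Your justification --- that $f$ and $f'$ ``realize the same minimal projective presentation'' --- is circular: they are minimal presentations of $\Hom(\R,X)$ and $\Hom(\R,Y)$ respectively, and two minimal presentations with isomorphic terms can present non-isomorphic modules (e.g. $P\xrightarrow{x}P$ and $P\xrightarrow{x^2}P$ over $k[x]/(x^3)$), so knowing $R_0$ and $R_1$ up to isomorphism does not determine the cone. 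This is precisely where [DK] uses rigidity a second time, via a geometric argument: applying $\Hom(-,X)$ to the triangle and using $\Ext^1(X,X)=0$ together with the approximation property of $g$ shows that the map $\End(R_0)\oplus\End(R_1)\to\Hom(R_1,R_0)$, $(\phi_0,\phi_1)\mapsto\phi_0f+f\phi_1$, is surjective; hence the orbit of $f$ under $\Aut(R_0)\times\Aut(R_1)$ is open in the irreducible affine space $\Hom(R_1,R_0)$ (this is where $k$ algebraically closed, or at least infinite, enters), and likewise for $f'$, so the two open orbits meet and therefore coincide, giving the desired isomorphism of triangles. Incidentally, the same surjectivity is what proves your no-common-summand claim. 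Without this open-orbit argument (or an equivalent substitute) your proof of (1) does not close; everything else in your sketch is fine once it is inserted.
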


\begin{proof}

All conclusions follow from Sections 2.1, 2.2, 2.3, 2.5 in [DK].
Note that the arguments there are stated only for cluster-tilting
subcategories, but work also for functorially finite maximal rigid
subcategories by the Corollary 2.5 above.

\end{proof}

\begin{thm}

Let $\R'$ be another functorially finite maximal rigid subcategory
in $\C$. Then the elements $\ind_{\R}(R')$, where $R'$ runs through
a system of representatives of the isomorphism classes of
indecomposables of $\R'$, form a basis of the free abelian group
$K_0^{split}(\R)$.

\end{thm}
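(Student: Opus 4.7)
I aim to prove that the map $\phi : K_0^{split}(\R') \to K_0^{split}(\R)$ sending $[R'] \mapsto \ind_\R(R')$ on indecomposables and extended by $\mathbf{Z}$-linearity is a group isomorphism. Proposition 3.3 (2), applied to the rigid object $\bigoplus_{i=1}^n R_i'$ for any finite collection of pairwise non-isomorphic indecomposables $R_i' \in \R'$, already provides $\mathbf{Z}$-linear independence of $\{\ind_\R(R_i')\}$. It therefore suffices to prove surjectivity of $\phi$: every class $[R]$ with $R \in \R$ indecomposable must be expressible as a $\mathbf{Z}$-linear combination of the $\ind_\R(R_i')$.

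The main technical step is the following \emph{additivity lemma}: for any triangle $X \to Y \to Z \to X[1]$ in $\C$ with $X \in \R$ and $Y, Z$ rigid, we have $\ind_\R(Y) = [X] + \ind_\R(Z)$ in $K_0^{split}(\R)$. To prove this, I combine the given triangle with a right $\R$-approximation triangle $R_1 \to R_0 \to Z \to R_1[1]$ (which exists by Corollary 2.5 and Lemma 2.4 since $Z$ is rigid) via Verdier's construction of the homotopy pullback $P$ of $Y \to Z$ and $R_0 \to Z$. This produces two triangles
\begin{equation*}
X \to P \to R_0 \to X[1] \qquad \text{and} \qquad R_1 \to P \to Y \to R_1[1].
\end{equation*}
The first has connecting morphism in $\Ext^1(R_0, X) = 0$ (since $R_0, X \in \R$), so it splits: $P \cong X \oplus R_0 \in \R$. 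A direct diagram chase shows that $P \to Y$ in the second triangle is a right $\R$-approximation: any morphism $R \to Y$ with $R \in \R$, composed with $Y \to Z$, factors through $R_0 \to Z$ by the approximation property, and the induced map $R \to Y \oplus R_0$ kills the difference map to $Z$, hence lifts to $P$. Thus the second triangle computes the index of $Y$, giving $\ind_\R(Y) = [P] - [R_1] = [X] + [R_0] - [R_1] = [X] + \ind_\R(Z)$.

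Given the lemma, surjectivity is immediate. For an indecomposable $R \in \R$, the object $R$ is rigid, so the dual of Lemma 2.4 applied to the functorially finite maximal rigid subcategory $\R'$ furnishes a triangle $R \to C \to D \to R[1]$ with $C, D \in \R'$ and $R \to C$ a left $\R'$-approximation. Applying the additivity lemma to this triangle, with $X = R \in \R$, $Y = C$ and $Z = D$, yields $\ind_\R(C) = [R] + \ind_\R(D)$, whence
\begin{equation*}
[R] \;=\; \ind_\R(C) - \ind_\R(D).
\end{equation*}
Decomposing $C$ and $D$ into their indecomposable summands in $\R'$ writes $[R]$ as a $\mathbf{Z}$-linear combination of elements of the form $\ind_\R(R_i')$ with $R_i' \in \R'$ indecomposable. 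Combined with the linear independence from Proposition 3.3 (2), this yields that $\{\ind_\R(R') : R' \in \R' \text{ indecomposable}\}$ is a $\mathbf{Z}$-basis of $K_0^{split}(\R)$.

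The main obstacle I anticipate is the verification that the map $P \to Y$ produced by the Verdier pullback is a genuine right $\R$-approximation; once this diagrammatic check is in place, the rest of the proof is formal and runs along the lines of [DK, Section 2.4], with Corollary 2.5 playing the role that $\C = \T \ast \T[1]$ plays in the cluster-tilting case.
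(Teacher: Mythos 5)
Your proposal is correct and is in substance the same argument as the paper's, which simply invokes the proof of Theorem 2.4 in [DK, 2.6]: linear independence from Proposition 3.3(2), and surjectivity by writing $[R]=\ind_{\R}(C)-\ind_{\R}(D)$ from a left $\R'$-approximation triangle $R\to C\to D\to R[1]$ together with the additivity of the index along such triangles, proved via the homotopy pullback octahedron exactly as in [DK], with Corollary 2.5 and Lemma 2.4 replacing $\C=\T*\T[1]$. Your version merely makes explicit the details (splitting of $X\to P\to R_0$ by rigidity of $\R$, the approximation property of $P\to Y$, and the tacitly used well-definedness and additivity of $\ind_{\R}$ on direct sums) that the paper leaves to the citation.
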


\begin{proof}

The proof of Theorem 2.4 in [DK, 2.6] works also in this setting.

\end{proof}

\begin{cor} \begin{enumerate}\item The category $\C$ has a maximal
rigid object if and only if all functorially maximal rigid
subcategories have a finite number of pairwise non-isomorphic
indecomposable objects. \item  All maximal rigid objects have the
same number of indecomposable direct summands (up to
isomorphism).\end{enumerate}
\end{cor}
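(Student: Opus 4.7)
The plan is to deduce both parts directly from Theorem 3.5, which says that for any two functorially finite maximal rigid subcategories $\R, \R'$, the images $\ind_\R(R')$ of the representatives of the isomorphism classes of indecomposables of $\R'$ form a $\mathbb{Z}$-basis of $K_0^{split}(\R)$. Since $K_0^{split}(\R)$ is by definition free abelian on the isomorphism classes of indecomposables of $\R$, Theorem 3.5 already forces these two index sets to have the same cardinality.

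For part (1), in the forward direction I would take a maximal rigid object $T$; then $\R := \add T$ is a functorially finite maximal rigid subcategory whose isomorphism classes of indecomposables are exactly the isomorphism classes of indecomposable summands of $T$, a finite set of size $n = |\ind(T)|$. Hence $K_0^{split}(\R) \cong \mathbb{Z}^n$. Applying Theorem 3.5 to any other functorially finite maximal rigid subcategory $\R'$, the indecomposables of $\R'$ form a basis of $\mathbb{Z}^n$ and so are $n$ in number (in particular finite). For the converse, invoking Remark 2.2(1) (skeletal smallness gives existence of a maximal rigid subcategory, which is necessarily functorially finite when it has finitely many indecomposables, e.g.\ as the $\add$ of a finite sum), one picks any functorially finite maximal rigid subcategory $\R$ with finitely many indecomposables $R_1,\dots,R_n$ and sets $T = \bigoplus_{i=1}^n R_i$. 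Then $\add T = \R$ is maximal rigid, so $T$ is a maximal rigid object.

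Part (2) is then immediate: given maximal rigid objects $T, T'$, the subcategories $\R = \add T$ and $\R' = \add T'$ are both functorially finite maximal rigid subcategories, and Theorem 3.5 identifies the number of non-isomorphic indecomposable summands of $T'$ with the rank of $K_0^{split}(\R)$, which is exactly the number of non-isomorphic indecomposable summands of $T$.

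I do not expect a real obstacle here; the statement is essentially a corollary of the fact that the rank of a free abelian group is a well-defined invariant, applied via Theorem 3.5. The only point worth writing carefully is the reverse direction of (1), where one must invoke the existence of at least one functorially finite maximal rigid subcategory before passing to its direct sum; this is the place where one uses Remark 2.2(1) together with the finiteness hypothesis to ensure the sum is actually a well-defined object of $\C$.
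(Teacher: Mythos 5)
Your proposal is correct and is exactly the intended argument: the paper gives no separate proof for this corollary, treating it as an immediate consequence of the preceding theorem (the indecomposables of one functorially finite maximal rigid subcategory index a basis of $K_0^{split}$ of another, and the rank of a free abelian group is well defined). Your extra care in the converse direction of (1) — needing at least one functorially finite maximal rigid subcategory to exist before forming the finite direct sum — is a reasonable point the paper glosses over, and your handling of it is consistent with Remark 2.2.
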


\section{Gorenstein property of maximal rigid subcategories}

Let $\R$ be a functorially finite maximal rigid subcategory of $\C$
and $\A$ the quotient category of $\D=\R[-1]*\R$ by $\R$. Let
mod$\R$ denote the category of finitely presented $\R-$modules where
a $\R-$module means a contravariantly functor from $\R$ to the
category of $k-$vector spaces. We know that $\A$ is an abelian
category whose abelian structure is induced by the triangulated
structure of $\C$ and there is an equivalence
$F:\A\rightarrow\text{mod}\R$ [IY]. As in section 2, we put
\begin{equation*}
\R^{\bot}:=\{X\in\C\mid \Hom(\R,X)=0\}\text{ and
}{}^{\bot}\R:=\{X\in\C\mid \Hom(X,\R)=0\}.
\end{equation*}
By $2-$CY property of $\C$,
 ${}^\bot\R[1]=\R[-1]^{\bot}$, which is denoted by $\S$. Clearly, both
($\R,\S$) and ($\S,\R$) are cotorsion pairs in the sense in [N]
(equivalently, $(\R, \S[1])$ and $(\S[-1],\R)$ are torsion pairs in
the sense in [IY]).
\medskip

For the convenience of the reader we recall briefly the abelian
structure of $\A$ from [N]. Let $\underline{f}\in\Hom_\A(X,Y)$ with
$X,Y\in\D$ and $f\in\Hom_\C(X,Y)$ where $f$ is a part of the
triangle $Z[-1]\s{h}\rightarrow X\s{f}\rightarrow Y\s{g}\rightarrow
Z$. Let $f_1: X\rightarrow R_0$ be a left $\R-$approximation of $X$
which extends to a triangle $R_1[-1]\rightarrow X\s{f}\rightarrow
R_0\rightarrow R_1$. Then $R_1\in \R$ by Lemma 2.4. We have the
following commutative diagram which is constructed from the octahedral
axiom:

$$\begin{array}{ccccccc}
&&Z[-1]&=&Z[-1]\\
&&\downarrow h&&\downarrow\\
R_1[-1]&\s{}\longrightarrow&X&\s{}\longrightarrow&R_0&\s{}\longrightarrow&R_1\\
\parallel&&\downarrow f&&\downarrow&&\parallel\\
R_1[-1]&\s{}\longrightarrow&Y&\s{m_f}\longrightarrow&M_f&\s{}\longrightarrow&R_1\\
&&\downarrow g&&\downarrow \\
&&Z&=&Z
\end{array}(*).$$
 The map $\underline{m_f}$ is the cokernel of
$\underline{f}$ [N] (note that $M_f\in
(\R[-1]*\R)*\R=\R[-1]*(\R*\R)=\R[-1]*\R)$.
\medskip

The kernel of $\underline{f}$ is obtained similarly. Since
$(\R,\S[1])$ is a torsion pair, we have a triangle $S_0'\rightarrow
R_0'\rightarrow Y\rightarrow S_0'[1]$, where $R_0'\in\R$ and
$S_0'\in\S$. Using the octahedral axiom, we have the first diagram
of the following two commutative diagrams. Since $(\R[-1], \S)$ is a
torsion pair, we have a triangle $R[-1]\s{\psi}\rightarrow
L_f\rightarrow S\rightarrow R$ with $R\in\R$ and $S\in\S$. Since
$(\R, \S[1])$ is also a torsion pair, we have another triangle
$S'\s\rightarrow R'\rightarrow S\rightarrow S'[1]$ with $R'\in\R$
and $S'\in\S$. Using the octahedral axiom, we have the second
diagram of the following two commutative diagrams:

$$\begin{array}{ccccccc}
&&Z[-1]&=&Z[-1]\\
&&\downarrow&&\downarrow h\\
S_0'&\s{}\longrightarrow&L_f&\s{l_f}\longrightarrow&X&\s{}\longrightarrow&S_0'[1]\\
\parallel&&\downarrow&&\downarrow f&&\parallel\\
S_0'&\s{}\longrightarrow&R_0'&\s{}\longrightarrow&Y&\s{}\longrightarrow&S_0'[1]\\
&&\downarrow&&\downarrow g\\
&&Z&=&Z
\end{array}(**)$$

and

$$\begin{array}{ccccccc}
&&S'&=&S'\\
&&\downarrow&&\downarrow\\
R[-1]&\s{\varphi}\longrightarrow&K_L&\s{}\longrightarrow&R'&\s{}\longrightarrow&R\\
\parallel&&\downarrow k_L&&\downarrow&&\parallel\\
R[-1]&\s{\psi}\longrightarrow&L_f&\s{}\longrightarrow&S&\s{}\longrightarrow&R\\
&&\downarrow&&\downarrow \\
&&S'[1]&=&S'[1]
\end{array}(***).$$

 The composition $\underline{l_fk_L}$ is the kernel of
$\underline{f}$.

\begin{rem}

[N, Remark 4.5]For any $X\in\R[-1]*\R$ and any
$\underline{y}\in\Hom_{\C/\R}(X,L_f)$, there exists a unique
morphism $\underline{x}\in\Hom_\A(X,K_L)$ such that
$\underline{y}=\underline{k_L} \underline{x}$.

$$\xymatrix@R=0.5cm{
                &         K_L \ar[dd]^{\underline{k_L}}     \\
  X \ar[ur]^{\underline{x}} \ar[dr]_{\underline{y}}                 \\
                &         L_f                 }$$
Thus $K_L$ is  determined uniquely up to a canonical isomorphism in
$\A$.
\end{rem}

The following lemma is a suitable version of Proposition 6.1 in [N]
in our setting. We include a proof for the convenience of the
reader.

\begin{lem}
Let $f:X\rightarrow Y$ be a morphism in $\C$ which extends a
triangle $Z[-1]\s{h}\rightarrow X\s{f}\rightarrow Y\s{g}\rightarrow
Z$. Then $\underline{f}$ is an epimorphism if and only if
$M_f\in\R$; $\underline{f}$ is a monomorphism if and only if
$L_F\in\S$.

In particular, if $Z$ is in $\R$, then $f$ is an epimorphism; if
$Z[-1]$ is in $\S$, then $f$ is a monomorphism.
\end{lem}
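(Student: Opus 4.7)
The plan is to identify the cokernel and the kernel of $\underline{f}$ in the abelian category $\A=\D/\R$ from the explicit constructions in diagrams $(*)$, $(**)$, $(***)$, and then translate their vanishing into membership in $\R$ or $\S$. The key elementary observation is that, since $\A$ is the additive quotient of $\D$ by $\R$, an object $N\in\D$ satisfies $\underline{N}=0$ in $\A$ if and only if $N\in\R$ (use that $\R$ is closed under summands). Consequently $\underline{f}$ is an epimorphism (resp.\ monomorphism) if and only if the target (resp.\ source) of its cokernel (resp.\ kernel) in $\A$ lies in $\R$.

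For the epimorphism part, diagram $(*)$ exhibits $\underline{m_f}:Y\to M_f$ as the cokernel of $\underline{f}$, so $\underline{f}$ is an epimorphism if and only if $M_f\in\R$. The ``in particular'' assertion follows from the vertical triangle $Z[-1]\to R_0\to M_f\to Z$ in $(*)$: rotating it, the triangle $R_0\to M_f\to Z\to R_0[1]$ splits by rigidity of $\R$ when $Z\in\R$, giving $M_f\cong R_0\oplus Z\in\R$.

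For the monomorphism part, the text preceding the lemma identifies $\underline{l_f k_L}:K_L\to X$ as the kernel of $\underline{f}$, so $\underline{f}$ is a monomorphism if and only if $K_L\in\R$. The essential step is then to show $K_L\in\R$ if and only if $L_f\in\S$. First observe $\R\subset\S$ by rigidity. Then the triangle $R[-1]\to L_f\to S\to R$ in $(***)$ has left term in $\R[-1]$ and right term in $\S$, so by uniqueness of $(\R[-1],\S)$-torsion-pair decompositions it is the canonical decomposition of $L_f$; hence $L_f\in\S$ if and only if $R=0$. The triangle $R[-1]\to K_L\to R'\to R$ (with $R'\in\R\subset\S$) is analogously the canonical decomposition of $K_L$, so $K_L\in\S$ if and only if $R=0$. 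The same uniqueness applied to objects of $\D=\R[-1]*\R$ yields $\D\cap\S=\R$, so $K_L\in\R$ if and only if $K_L\in\S$. Chaining these equivalences gives the claim. The ``in particular'' assertion for the monomorphism case is then immediate: when $Z[-1]\in\S$, the vertical triangle $Z[-1]\to L_f\to R_0'\to Z$ in $(**)$ exhibits $L_f$ as an extension with end terms in the extension-closed subcategory $\S=\R[-1]^{\bot}$, so $L_f\in\S$.

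The main obstacle is the equivalence $K_L\in\R$ if and only if $L_f\in\S$. The guiding idea is that the two octahedral triangles produced in $(***)$ share the same ``torsion part'' $R[-1]$ of $L_f$, so membership of either $K_L$ or $L_f$ in $\S$ reduces to the single vanishing $R=0$, and then the inclusion $\R\subset\S$ together with $\D\cap\S=\R$ closes the loop.
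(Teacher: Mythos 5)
Your treatment of the epimorphism half, of the ``in particular'' statements, and your reduction of the monomorphism half to the equivalence $K_L\in\R\Leftrightarrow L_f\in\S$ all match the paper. The gap is in how you establish that equivalence. You appeal to ``uniqueness of $(\R[-1],\S)$-torsion-pair decompositions'' to conclude that $L_f\in\S$ iff $R=0$ and that $K_L\in\S$ iff $R=0$. But decomposition triangles for a torsion pair $(\X,\Y)$ in a triangulated category are \emph{not} unique up to isomorphism unless one also has $\Hom(\X,\Y[-1])=0$, and here that fails: $\Hom(\R[-1],\S[-1])=\Hom(\R,\S)\supseteq\Hom(\R,\R)\neq 0$. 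Concretely, for any $M\in\S$ and any nonzero $R\in\R$ the triangle $R[-1]\s{0}\rightarrow M\rightarrow M\oplus R\rightarrow R$ is a perfectly good $(\R[-1],\S)$-decomposition of $M$ (note $M\oplus R\in\S$ since $\R\subset\S$), so $R$ is not determined by $M$ and the implication ``$L_f\in\S\Rightarrow R=0$'' is false for the decomposition actually used to build $(***)$. Since both directions of your chain pass through ``$\,\cdot\,\in\S\Leftrightarrow R=0$'', both directions of the key equivalence are unproved.

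The repair has to use the maps in $(***)$, not just the objects, which is what the paper does. For $K_L\in\R\Rightarrow L_f\in\S$: then $\varphi=0$ because $\Hom(\R[-1],\R)=0$, hence $\psi=k_L\varphi=0$, the triangle $R[-1]\s{0}\rightarrow L_f\rightarrow S\rightarrow R$ splits, and $L_f$ is a summand of $S\in\S$. For $L_f\in\S\Rightarrow K_L\in\R$: then $\psi=0$, so $k_L\varphi=0$, so $k_L$ factors through $K_L\rightarrow R'$; hence $\underline{k_L}=0$ in $\C/\R$, and the universal property of Remark 4.1 (applied with $X=K_L$ and $\underline{y}=\underline{k_L}$) forces $\underline{1_{K_L}}=0$, i.e.\ $K_L\in\R$. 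Your auxiliary claim $\D\cap\S=\R$ is true and can be proved by the same splitting argument (a triangle $R_1[-1]\rightarrow X\rightarrow R_0\rightarrow R_1$ with $X\in\S$ has zero first map), but it does not substitute for the use of Remark 4.1 in the second implication. Your argument for the final ``in particular'' statement (extension-closedness of $\S$ applied to the second column of $(**)$) is a valid, mildly different route from the paper's splitting argument and is fine as stated.
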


\begin{proof}

(1) $\underline{f}:X\rightarrow Y$ is an epimorphism in $\A$ if and
only if $M_f\cong0$ in $\A$, i.e. $M_f\in\R$.

(2) $\underline{f}:X\rightarrow Y$ is a monomorphism in $\A$ if and
only if $K_L\cong0$ in $\A$, i.e. $K_L\in\R$. We claim that
$K_L\in\R$ if and only if $L_F\in\S$. If $K_L\in\R$, then
$\varphi=0$ by $\Hom(\R[-1],\R)=0$. So $\psi=k_L\varphi=0$, and
hence $L_F\in\S$. If $L_F\in\S$, then $\psi=0$. So $k_L\varphi=0$
that implies that $k_L$ factors through $R'$. Then
$\underline{k_L}=\underline{0}$, and then $K_L\in\R$ by Remark 4.1.

(3)If $Z$ is in $\R$, then the third column in (*) is a splitting
triangle. So $M_f\in\R$. Dually, if $Z[-1]$ is in $\S$, then the
second column in (**) is a splitting triangle. So $L_f\in\S$.

\end{proof}

Now we determine the projective objects and injective objects in
$\A$.

\begin{prop}

An object $M$ of $\A$ is a projective object if and only if
$M\in\R[-1]$. An object $N$ of $\A$ is an injective object if and
only if $N\in\R[1]$.

\end{prop}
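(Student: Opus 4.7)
The plan is to prove both halves by parallel dual arguments, using the cokernel description in diagram $(*)$ for the projective statement and the kernel descriptions in diagrams $(**)$, $(***)$ for the injective one. I sketch the projective case in detail; the injective case dualizes throughout.

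To show $R[-1]$ is projective for $R\in\R$, take any epimorphism $\underline{f}\colon X\to Y$ in $\A$ with underlying triangle $Z[-1]\to X\to Y\to Z$. Lemma 4.2(1) gives $M_f\in\R$, so the middle row of $(*)$ supplies a triangle $R_1[-1]\to Y\to M_f\to R_1$ with $R_1,M_f\in\R$. For any $\underline{g}\colon R[-1]\to Y$ represented by $g\in\Hom_\C(R[-1],Y)$, the composite $R[-1]\to Y\to M_f$ sits in $\Hom_\C(R[-1],M_f)=\Ext^1(R,M_f)=0$ by rigidity, so $g=(R_1[-1]\to Y)\circ h$ for some $h\colon R[-1]\to R_1[-1]$; the left column of $(*)$ identifies the map $R_1[-1]\to Y$ with $f\circ(R_1[-1]\to X)$, giving the lift $(R_1[-1]\to X)\circ h$ of $g$ through $f$. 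Conversely, if $M\in\A$ is projective, extract from $M\in\D=\R[-1]*\R$ a triangle $R_1[-1]\to M\to R_0\to R_1$ with $R_i\in\R$; the cone $R_0$ lies in $\R$, so Lemma 4.2(3) makes $R_1[-1]\to M$ an epimorphism in $\A$, which then splits by projectivity. Thus $M$ is an $\A$-summand of $R_1[-1]$, and the Krull--Schmidt property of $\A$ (inherited from $\C$ via the equivalence $F\colon\A\to\mathrm{mod}\,\R$) together with closure of $\R[-1]$ under summands forces $M\in\R[-1]$.

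The injective statement is obtained by dualizing: Lemma 4.2(2) characterizes monomorphisms $\underline{f}$ by $L_f\in\S$, and the vanishing $\Hom_\C(L_f,R[1])=\Ext^1(L_f,R)=0$ (from $L_f\in\S={}^{\bot}\R[1]$) replaces rigidity in the extension argument through diagrams $(**)$ and $(***)$. For the converse, applying the cotorsion pair $(\S[-1],\R)$ to a shift of $M\in\D$ produces a triangle $S\to M\to R[1]\to S[1]$ with $S\in\S$ and $R\in\R$, and the dual of Lemma 4.2(3) makes $M\to R[1]$ a monomorphism in $\A$; the summand argument then yields $M\in\R[1]$. I expect the main obstacle to be exactly this injective half: the kernel construction $(***)$ cascades two torsion-pair factorizations and is subtler than the cokernel in $(*)$, so the dual diagram chase requires more careful bookkeeping, and one also has to reconcile the fact that $R[1]$ need not literally lie in $\D$ by identifying it with its representative in $\A$ under the equivalence with $\mathrm{mod}\,\R$ (where by 2-CY duality it corresponds to the injective module $D\,\Hom_\R(R,-)$).
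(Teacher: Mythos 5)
Your proof is correct and follows essentially the same route as the paper's: the same diagram chases through $(*)$ and $(**)$ using the vanishings $\Hom(\R[-1],\R)=0$ and $\Hom(\S,\R[1])=0$ to lift/extend maps, and the same splitting argument via Lemma 4.2 applied to the triangles coming from $M\in\R[-1]*\R$ and $\C=\S*\R[1]$ for the converses. The one obstacle you flag --- that $R[1]$ might not literally lie in $\D$ --- is resolved more directly than via $\mbox{mod}\R$: the object $R[2]$ is rigid, so Corollary 2.5 gives $R[2]\in\R*\R[1]$, i.e. $R[1]\in\R[-1]*\R$, which is exactly how the paper handles it.
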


\begin{proof}

(1)Given $R\in\R$. For any epimorphism $\underline{f}:X\rightarrow
Y$ in $\A$, and any morphism $\underline{\alpha}:R[-1]\rightarrow
Y$, $m_f\alpha=0$ by $M_f\in\R$ and $\Hom(\R[-1],\R)=0$. So
$g\alpha=0$. Then $\alpha$ factors through $f$, hence
$\underline{\alpha}$ factors through $\underline{f}$. This proves
that $R[-1]$ is projective in $\A$.

Conversely assume $M$ is a projective object in $\A$. Since
$M\in\R[-1]*\R$, there is a triangle $\R_0[-1]\s{\sigma}\rightarrow
M\rightarrow R_1\rightarrow R_0$ with $R_0,R_1\in\R$. Then
$\underline{\sigma}$ is an epimorphism in $\A$ by Lemma 4.2. So the
epimorphism $\underline{\sigma}: \R_0[-1]\rightarrow M   $ splits.
Hence $M\in\R[-1]$.

(2)Note that $R[1]\in\R[-1]*\R$, for all $R\in\R$, by Corollary 2.5.

Given $R\in\R$. For any monomorphism $\underline{f}:X\rightarrow Y$
in $\A$, and any morphism $\underline{\beta}:X\rightarrow R[1]$,
$\beta l_f=0$ by $L_f\in\S$. So $\beta h=0$. Then $\beta$ factors
through $f$, hence $\underline{\beta}$ factors through
$\underline{f}$. This proves that $R[1]$ is injective in $\A$.

Conversely assume $M$ is an injective object in $\A$. Since
$M\in\C=\S*\R[1]$, there is a triangle $\S\rightarrow
M\s{\tau}\rightarrow R[1]\rightarrow S[1]$ with $R\in\R$ and
$S\in\S$. Then $\underline{\tau}$ is a monomorphism in $\A$ by Lemma
4.2. So $\underline{\tau}$  splits, hence $M\in\R[1]$.

\end{proof}

The main result in this section is the following theorem which is a
generalization of Proposition 2.1 in [KR], Theorem 4.3 in [KZ]. This
has been proved in [V, Y] for $\C$ being cluster tubes.

\begin{thm}

Let $\C$ be a 2-Calabi-Yau triangulated category with a functorially
finite maximal rigid subcategory $\R$ and let $\A$ be the abelian
quotient category of $\R[-1]*\R$ by $\R$. Then

(1) The abelian category $\A$ has enough projective objects.

(2) The abelian category $\A$ has enough injective objects.

(3) The abelian category $\A$ is Gorenstein of Gorenstein dimension
at most one.

\end{thm}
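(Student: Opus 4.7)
The plan is to establish the three assertions in succession, using Proposition 4.3 (which identifies the projectives of $\A$ as $\R[-1]$ and the injectives as $\R[1]$) and Lemma 4.2 (which tells us when a morphism of $\C$ descends to an epimorphism or a monomorphism in $\A$).

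For (1), start with $M\in\A\subseteq\R[-1]\ast\R$ and pick a defining triangle $R_1[-1]\to M\to R_0\to R_1$ with $R_0,R_1\in\R$. Rotating once to $R_0[-1]\to R_1[-1]\to M\to R_0$ puts it in the shape of Lemma 4.2 with $Z=R_0\in\R$, so $R_1[-1]\to M$ is an epimorphism in $\A$ from the projective $R_1[-1]$. For (2), use the decomposition $\C=\S\ast\R[1]$ coming from the torsion pair $(\S[-1],\R)$ to obtain $S\to M\to R[1]\to S[1]$ with $S\in\S$, $R\in\R$; Lemma 4.2 (now with $Z[-1]=S\in\S$) makes $M\to R[1]$ a monomorphism in $\A$ into the injective $R[1]$.

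For (3) I must show that every projective has injective dimension at most one, and dually. The key observation is that $R[-1]$ and $R[1]$ are themselves rigid objects of $\C$, so they admit the concrete decompositions of Corollary 2.5 and Lemma 2.4. For a projective $P=R[-1]$, the first part of Lemma 2.4 applied to the rigid object $R[-1]$ provides a triangle $R_1\to R_0\to R[-1]\to R_1[1]$ with $R_0,R_1\in\R$; rotating yields
\begin{equation*}
R_0\longrightarrow R[-1]\longrightarrow R_1[1]\longrightarrow R_0[1].
\end{equation*}
Since $\Ext^1(\R,\R)=0$ forces $\R\subseteq\S$, the term $R_0$ lies in $\S$, so Lemma 4.2 makes $R[-1]\to R_1[1]$ a monomorphism in $\A$ into an injective. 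To identify the cokernel I feed this triangle into diagram $(*)$: because $\Hom(R[-1],\R)=\Ext^1(R,\R)=0$, the left $\R$-approximation of $R[-1]$ is the zero map, the octahedron degenerates, and the column containing $M_f$ forces $M_f\cong R_0[1]\in\R[1]$. This produces the length-one injective resolution $0\to R[-1]\to R_1[1]\to R_0[1]\to 0$.

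The injective case is symmetric. For $I=R[1]$, the second part of Lemma 2.4 applied to the rigid $R[1]$ supplies, after rotation, a triangle $R_0[-1]\to R_1[-1]\to R[1]\to R_0$ with $R_0,R_1\in\R$, in which $R_1[-1]\to R[1]$ is an epimorphism in $\A$ from a projective (Lemma 4.2, with $Z=R_0\in\R$). Running this through the diagrams $(**)$ and $(***)$, the $(\R,\S[1])$-decomposition of $R[1]$ turns out to be trivial: since $\Hom(\R,R[1])=\Ext^1(\R,R)=0$ one can take $R_0'=0$, $S_0'=R$, which forces $L_f\cong R_0[-1]$; then the $(\R[-1],\S)$-decomposition of $L_f=R_0[-1]$ is also trivial, yielding $K_L\cong R_0[-1]\in\R[-1]$. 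Hence the kernel is projective and we obtain the length-one projective resolution $0\to R_0[-1]\to R_1[-1]\to R[1]\to 0$. The main obstacle is simply the bookkeeping through Nakaoka's three octahedral diagrams; the content of the argument is the observation that both approximation/torsion steps in them degenerate when the object being resolved is $R[\pm1]$, and once this is recognized the Gorenstein bound of one falls out of both computations at once.
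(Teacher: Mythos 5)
Your proof is correct and follows essentially the same route as the paper: the same triangles from $\R[-1]\ast\R$, $\S\ast\R[1]$ and Corollary 2.5, with Lemma 4.2 identifying the epimorphisms and monomorphisms. The only difference is that where the paper asserts the kernel and cokernel terms ``by the structure of kernel/cokernel,'' you explicitly check that Nakaoka's diagrams degenerate for $R[\pm 1]$ because the relevant approximations vanish ($\Hom(R[-1],\R)=\Ext^1(R,\R)=0$, etc.) --- a worthwhile verification that the paper leaves implicit.
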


\begin{proof}

(1) Given $X\in\R[-1]*R$. There is a triangle $R_1[-1]\rightarrow
R_0[-1]\s{f}\rightarrow X\rightarrow R_1$ with $R_0,R_1\in\R$. Then
$\underline{f}:R_0[-1]\rightarrow X$ is an epimorphism with
$R_0[-1]$ a projective object.

(2) Given $X\in\R[-1]*R$. Since $X\in\C=\S*\R[1]$. There is a
triangle $S\rightarrow X\s{g}\rightarrow R[1]\rightarrow S[1]$ with
$R\in\R$ and $S\in\S$. Then $\underline{g}:X\rightarrow R[1]$ is a
monomorphism with $R[1]$ an injective object.

(3) For an injective object $R[1]$ in $\A$, since
$R[1]\in\R[-1]*\R$, then there is a triangle
$R_1[-1]\s{h}\rightarrow R_0[-1]\s{f}\rightarrow R[1]\rightarrow
R_1$ with $R_0,R_1\in\R$. Then $\underline{f}$ is an epimorphism by Lemma 4.2 and
$\underline{h}$ is the kernel of $\underline{f}$ by the structure of
kernel. So we have an exact sequence $0\rightarrow
R_1[-1]\s{\underline{h}}\rightarrow
R_0[-1]\s{\underline{f}}\rightarrow R[1]\rightarrow0$ which is a
projective resolution of the injective object $R[1]$ in $\A$.
Therefore proj.dim.$R[1]\leq1$.

For a projective object $R[-1]$ in $\A$, since $R[-1]\in\R*\R[1]$ by
Corollary 2.5, there is a triangle $R_0\rightarrow R[-1]\s{f}\rightarrow
R_1[1]\s{g}\rightarrow R_0[1]$ with $R_0,R_1\in\R$. Then $\underline{f}$ is a
monomorphism in by Lemma 4.1 and $\underline{g}$ is the cokernel of
$\underline{f}$ by the structure of cokernel. So we have an exact
sequence $0\rightarrow R[-1]\s{\underline{f}}\rightarrow
R_1[1]\s{\underline{g}}\rightarrow R_0[1]\rightarrow0$ which is a
injective resolution of the projective object $R[-1]$ in $\A$.
Therefore inj.dim.$R[-1]\leq1$.

Therefore $\A$ is Gorenstein of Gorenstein dimension at most one.

\end{proof}

As in [KZ], we have the following corollary.

\begin{cor}

Let $\C$ be a 2-Calabi-Yau triangulated category and $\R$ a
functorially finite maximal rigid subcategory. Then $\A$ is a
Frobenius category if and only if $\R=\R[2]$.

\end{cor}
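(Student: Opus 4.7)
The plan is to use Theorem 4.4 together with Proposition 4.3 to recast the Frobenius condition on $\A$ as an equality of subcategories inside $\C$. By Theorem 4.4, $\A$ already has enough projectives and enough injectives, so $\A$ is Frobenius exactly when its projective and injective classes coincide; and by Proposition 4.3, these classes are, up to isomorphism in $\A$, the objects of the form $R[-1]$ and $R'[1]$ with $R,R'\in\R$. Hence I must show that every such $R[-1]$ is $\A$-isomorphic to some $R'[1]$ and conversely if and only if $\R=\R[2]$.

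The ``if'' direction is immediate: if $\R=\R[2]$ then shifting yields $\R[-1]=\R[1]$ as subcategories of $\C$, so every projective of $\A$ is literally an injective and $\A$ is Frobenius.

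For the converse, assume $\A$ is Frobenius and fix $R\in\R$. Then $R[-1]$ is injective in $\A$, so there exist $R'\in\R$ and morphisms $f\colon R[-1]\to R'[1]$, $g\colon R'[1]\to R[-1]$ in $\C$ such that $gf-1_{R[-1]}$ and $fg-1_{R'[1]}$ each factor through an object of $\R$. The main obstacle is to promote this $\A$-isomorphism to a bona fide $\C$-isomorphism, and the key input is rigidity of $\R$: for any $R''\in\R$,
\begin{equation*}
\Hom(R[-1],R'')=\Ext^1(R,R'')=0, \qquad \Hom(R'',R'[1])=\Ext^1(R'',R')=0.
\end{equation*}
The first vanishing kills the first arrow of any factorization $R[-1]\to R''\to R[-1]$, and the second kills the last arrow of any factorization $R'[1]\to R''\to R'[1]$. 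Hence $gf=1_{R[-1]}$ and $fg=1_{R'[1]}$ already hold in $\C$, so $R[-1]\cong R'[1]$ in $\C$, giving $R\cong R'[2]\in\R[2]$. This shows $\R\subseteq\R[2]$, and the symmetric argument applied to an injective $R'[1]$ (which is now also projective) yields $\R[2]\subseteq\R$. Therefore $\R=\R[2]$.
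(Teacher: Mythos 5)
Your proof is correct and follows essentially the same route as the paper: by Theorem 4.4 and Proposition 4.3, $\A$ is Frobenius precisely when the projectives $\R[-1]$ and the injectives $\R[1]$ coincide, which is the condition $\R=\R[2]$. The only difference is that you carefully promote the $\A$-isomorphism $R[-1]\cong R'[1]$ to a $\C$-isomorphism using the vanishing of $\Hom(R[-1],\R)$ and $\Hom(\R,R'[1])$ — a point the paper's one-line proof leaves implicit — and this extra verification is valid.
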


\begin{proof}
$\A$ is Frobenius if and only if the sets of projective objects or
of injective objects of $\A$ coincide, i.e. $\R[-1]=\R[1]$ if and
only if $\R=\R[2]$.

\end{proof}

In the last part of this section, we assume that the $2-$CY
triangulated category $\C$ admits a maximal rigid object. It follows
from Corollary 3.7 that all maximal rigid subcategories are of form
add$R$, where $R$ is a maximal rigid object. The numbers of
indecomposable direct summands of all basic maximal rigid objects
are the same. Two maximal rigid objects are called reachable via
simple mutations if one of them can be obtained from another by
finite steps of simple mutations.

\begin{defn}
Let $R$ be a maximal rigid object in $\C$. The endomorphism algebra
of $R$ is called a $2-$CY maximal tilted algebra.

\end{defn}

The $2-$CY tilted algebras [BIRS] which by definition the
endomorphism algebras of cluster tilting objects in a $2-$CY
triangulated category are a special case of $2-$CY maximal tilted
algebra. The converse is not true in general since the $2-$CY
maximal tilted algebras may contains loops [BIKR][BRV].

Now we collect the representation theoretic properties of $2-$CY
maximal tilted algebras.

\begin{prop}
\begin{enumerate}
\item $2-$CY maximal tilted algebras are Gorenstein algebras of
dimension at most $1$.
\item Let $R$ and $R'$ form a simple mutation pair. Then $\End R$ and
$\End R'$ are nearly Morita equivalent, i.e. $\mod R/\add S_i\approx
\mod R'/add S'_i$.
\item If $R$ and $R'$ are reachable via simple mutations, then $\End R$ and
$\End R'$ have the same representation type.

\end{enumerate}

\end{prop}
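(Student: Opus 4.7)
The plan is to deduce (1) from Theorem 4.4, derive (2) from Plamondon's identity $\R*\R[1]=\R'*\R'[1]$ for a simple mutation, and obtain (3) by iterating (2).

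For (1), take $\R=\add R$. The functor $F\colon \A\to \mbox{mod}\,\R$ recalled at the beginning of Section 4 is an equivalence of abelian categories, and $\mbox{mod}\,\R$ is equivalent to $\mbox{mod}\,\End_{\C}(R)$. Hence Theorem 4.4 transfers verbatim: the module category of the $2$-CY maximal tilted algebra $\End R$ has projective and injective dimensions of its projectives/injectives bounded by $1$, which is exactly the Gorenstein condition for the algebra.

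For (2), write $\R=\add(\R_1\cup\{M\})$ and $\R'=\add(\R_1\cup\{M^*\})$ with $\dim_k\Hom(M,M^*[1])=1$. By the result of Plamondon cited before Definition 3.4, $\R*\R[1]=\R'*\R'[1]$; shifting yields $\R[-1]*\R=\R'[-1]*\R'=:\D$. Hence both abelian categories $\A=\D/\R$ and $\A'=\D/\R'$ are quotients of the same subcategory $\D$. The simple $S_i$ of $\mbox{mod}\,\End R\simeq\A$ corresponding to the summand $M$ has projective cover $M[-1]$, and symmetrically $S_i'$ has projective cover $M^*[-1]$ in $\A'$. I would then identify both Serre quotients $\A/\add S_i$ and $\A'/\add S_i'$ with the common additive quotient $\D/\add(\R_1\cup\{M,M^*\})$, using the exchange triangles linking $M$ and $M^*$ (which, by simplicity of the mutation, match up through the one-dimensional space $\Hom(M,M^*[1])$) to compare morphism spaces. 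Transporting via $F$ delivers the claimed nearly Morita equivalence.

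For (3), iterate (2) along a sequence $R=R^{(0)},R^{(1)},\ldots,R^{(n)}=R'$ of simple mutations. The elementary observation is that if $\mbox{mod}\,A/\add S\simeq \mbox{mod}\,B/\add S'$ for simples $S,S'$, then the iso-classes of indecomposables of $\mbox{mod}\,A$ and $\mbox{mod}\,B$ correspond outside $\{S\}$ vs.\ $\{S'\}$, so in every dimension $d$ the numbers of $d$-dimensional indecomposables differ by at most one. Representation-finiteness is thus preserved, and the tame/wild growth invariants are preserved as well, so $\End R^{(j)}$ and $\End R^{(j+1)}$ share representation type; chaining along the sequence gives the result.

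The main obstacle is (2): producing the explicit equivalence of the two Serre quotients with the common target. This requires Nakaoka's descriptions of kernels and cokernels in $\A$ (recalled in Section 4) to pin down exactly which morphisms in $\D$ become zero after localizing at the simple, and the hypothesis $\dim_k\Hom(M,M^*[1])=1$ is essential so that the two exchange triangles furnish compatible $\R_1$-approximations on both sides. The cluster-tilted analogue of Buan--Marsh--Reiten and Keller--Reiten supplies the template, but one must verify that passing from cluster-tilting to maximal rigid (using Corollary 2.5 in lieu of $\C=\R*\R[1]$) does not obstruct the construction.
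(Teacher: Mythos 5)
Your overall strategy coincides with the paper's: (1) is read off from Theorem 4.4 via the equivalence $\A\simeq\mbox{mod}\,\End R$, and for (3) you realize $\mbox{mod}\,\End R$ and $\mbox{mod}\,\End R'$ as the quotients $\D/\add R$ and $\D/\add R'$ of the common category $\D=R[-1]*R=R'[-1]*R'$ (Plamondon's identity for a simple mutation) and compare them through the double quotient $\D/\add(R\cup R')$ --- this is exactly the paper's argument. There are, however, two genuine gaps. First, your item (2) is only a plan, not a proof: you write ``I would then identify both Serre quotients\dots'' and yourself flag this identification as the main obstacle, but you never carry out the comparison of morphism spaces or show where $\dim_k\Hom(M,M^*[1])=1$ is used. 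The paper does not prove this either; it cites Yang [Y], where the nearly Morita equivalence is established. As written, (2) remains unproved.

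Second, and more seriously, the step in (3) asserting that ``the tame/wild growth invariants are preserved as well'' does not follow from your ``elementary observation.'' An equivalence $\mbox{mod}\,A/\add S\simeq\mbox{mod}\,B/\add S'$ is an equivalence of additive quotient categories: it gives a bijection between indecomposables away from $S$ and $S'$, but it need not preserve dimension vectors, so the claim that the numbers of $d$-dimensional indecomposables differ by at most one in each dimension $d$ is unjustified; and even a dimension-preserving bijection of indecomposables would not by itself transport one-parameter families or a wild embedding of $k\langle x,y\rangle$-mod. Preservation of finite type is indeed elementary (finitely many indecomposables on one side iff on the other), but the wild case is precisely the content of Krause's theorem that stable equivalence preserves representation type [Kr], which the paper invokes before concluding via the tame--wild dichotomy. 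Without citing or reproving [Kr], your passage from ``indecomposables correspond up to one simple'' to ``same representation type'' is a gap.
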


\begin{proof}
\begin{enumerate}
\item This is direct consequence of Theorem 4.4.

\item This was proved in [Y].

\item Denote $A=\End R$ and $A'=\End R'$. From the assumption, we have that $R[-1]*R=R'[-1]*R'$ by
[Pal], which is denoted by $\D$.  By Theorem 4.4,
$A$-mod$\thickapprox \D/ \mbox{add}R$ and $A'$-mod$\thickapprox \D/
\mbox{add}R'$. Therefore $A$-$\mbox{mod}/\mbox{add}R'\thickapprox
\mathcal{D}/ \mbox{add}(R\cup R')\approx
A'$-$\mbox{mod}/\mbox{add}R.$ Hence, ind$A$ is a finite set if and
only if ind$\mathcal{D}$ is a finite set. Thus $A$ is of finite type
if and only if $A'$ is so. Moreover, by the proof in [Kr], $A
$-$\mbox{mod}$ is wild if and only if $A'$-$\mbox{mod}$ is wild.
Therefore, by tame-wild dichotomy, $A$ and $A' $ have the same
representation type.

\end{enumerate}

\end{proof}

\begin{center}
\textbf {ACKNOWLEDGMENTS.}\end{center} The authors would like to
thank Idun Reiten for her suggestion and interesting in this work.

\end{document}